\newcommand{\nobracket}{}
\newcommand{\tmname}[1]{\textsc{#1}}
\newcommand{\tmop}[1]{#1}          
\newcommand{\tmstrong}[1]{\textbf{#1}}
\newcommand{\Fq}{\mathbb{F}_q}
\newcommand{\Fqm}{\mathbb{F}_{q^m}}
\newcommand{\QP}{\mathbb{F}_{q^m} \langle X^q \rangle}
\newcommand{\SM}[2]{\text{SM}(#1,#2)}
\newcommand{\sO}{\tilde{\mathcal{O}}}
\newcommand{\GF}[1]{\mathbb{F}_{#1}}          
\newcommand{\supp}{\operatorname{supp}}
\newcommand{\LC}{\operatorname{LC}}
\theoremstyle{definition}
\newtheorem{theorem}{Theorem}
\newtheorem{proposition}[theorem]{Proposition}
\newtheorem{lemma}[theorem]{Lemma}
\newtheorem{remark}{Remark}
\newtheorem{definition}{Definition}
\newtheorem{corollary}{Corollary}
\author{
      Philippe Gaborit\thanks{XLIM, France, MATHIS/CRYPTIS, \texttt{philippe.gaborit@unilim.fr}} \\
      \and
      Camille Garnier\thanks{XLIM, France, MATHIS/CRYPTIS, \texttt{camille.garnier@unilim.fr}} \thanks{Research financially supported by the Military French Ministry – Defense and Innovation Agency (DGA-AID).}\\
      \and
      Olivier Ruatta\thanks{XLIM, France, MATHIS/CRYPTIS and Inria Bordeaux - Sud-Ouest Research Center, France \texttt{olivier.ruatta@unilim.fr}}
}
\begin{document}

\title{Linearized Polynomial Chinese Remainder codes}




\maketitle

\begin{abstract}

We introduce a new family of rank-metric and sum-rank-metric codes called linearized Chinese Remainder Theorem codes ($q$CRT codes), constructed over linearized polynomial rings via a non-commutative generalization of the classical Chinese Remainder Theorem. After establishing the necessary algebraic foundations — including an effective CRT and its lifting for linearized polynomials — we present explicit code constructions and show that several well-known code families, such as Gabidulin or simple codes are related to $q$CRT. A probabilistic decoding algorithm is proposed for a subclass of these codes whose moduli have coefficients in a base field, with an explicit analysis of its failure rate under a uniform error model. This algorithm is further extended to a broader class of codes with moduli over small extension fields. The decoding strategy, inspired by the Chinese Remainder lifting algorithm, exploits the structure of the error support to recover the transmitted codeword. Numerical experiments illustrate the success probability as a function of the error rank weight and code parameters, highlighting the flexibility of the construction and the role of the extension degree in controlling decoding performance.
\end{abstract}

\section{Introduction} \label{intro}

\indent \textbf{Motivation:} Rank and sum-rank-metric codes, such as Gabidulin \cite{gabidulin_theory_1985} codes and LRPC codes \cite{gaborit_low_2013}, have found numerous applications in telecommunications, distributed storage, and cryptography \cite{bartz_rank-metric_2022} and \cite{martinez-penas_codes_2022}. However, the diversity of these codes is limited by their constructions, which often rely on specific base codes or techniques. This paper introduces a new family of rank and sum-rank-metric codes called \textit{linearized Chinese Remainder Theorem (CRT) codes} or \textit{$q$-CRT codes}, along with an efficient decoding algorithm, expanding the scope of potential applications in these domains.

Chinese Remainder Theorem (CRT) codes, introduced by Stone \cite{stone_multiple-burst_1963}, have been extensively studied for both integer and polynomial settings over finite fields. Many works have focused on improving their decoding algorithms, such as those in \cite{goldreich_chinese_1999}, \cite{guruswami_soft-decision_2000}  and \cite{yu_polynomial_2012}\color{black}. The fundamental idea behind CRT codes is simple yet powerful: to encode an element of smaller size (the Euclidean gauge) than the least common multiple of a set of moduli, using the difference in size as redundancy. This approach naturally recovers a wide range of classical code constructions, including Reed-Solomon codes \cite{reed_polynomial_1960} as a special case.

In this work, we apply a similar idea to a special case of right-Euclidean rings, taking advantage of the unique local error properties of rank and sum-rank metrics. This enables us to design a decoding algorithm directly derived from the Chinese Remainder Lifting algorithm,  as in \cite{yu_polynomial_2012}\color{black}.

\textbf{Contributions:} The main contributions of this work are as follows:

\begin{enumerate}
      \item We introduce the mathematical foundation for our new code family: linearized polynomial rings over finite fields and an effective Chinese \\Remainder Theorem tailored to these rings. This enables us to construct a wide range of linearized CRT codes with diverse parameters.
      \item We present explicit constructions and examples of linearized CRT codes, highlighting their relation to known codes in the literature.
      \item We propose a decoding algorithm for a special case of linearized CRT codes and extend it to a broader range of applications. This algorithm has an associated failure rate, which we analyze and discuss.
      \item Finally, we study the parameters of these new codes to provide insights into their decoding properties and potential applications in various fields.
\end{enumerate}

\textbf{Organization:} The remainder of this paper is structured as follows. Section \ref{introrank} introduces rank and sum-rank-metric codes and their applications. In Section \ref{linpol}, we discuss linearized polynomial rings over finite fields and the effective Chinese remainder Theorem for these rings. Section \ref{QPCRT} presents the construction of linearized CRT codes, along with examples and connections to existing codes. A decoding algorithm is proposed in Section \ref{simpdecod}, followed by an analysis of its failure rate and parameters. We extend this algorithm to a wider class in Section \ref{widerdecod} together with the parameters and failure rate in this case.

\section{Rank and sum-rank-metric codes} \label{introrank}

In this section, we introduce the notation and foundational concepts for rank and sum-rank-metric codes. For a more extensive presentation of rank-metric codes, we refer to \cite{bartz_rank-metric_2022}.\\

\noindent Let $q$ be a power of a prime and let $m \in \mathbb{N}$, we denote
$\mathbb{F}_q$ the field with $q$ elements and $\mathbb{F}_{q^m}$ an extension
of $\mathbb{F}_q$ of degree $m$.

\noindent The field $\mathbb{F}_{q^m}$ can be considered as a $\mathbb{F}_q$-vector
space of dimension $m$, i.e. $\mathbb{F}_{q^m} \cong (\mathbb{F}_q)^m$. We
denote $\{ b_1, \cdots, b_m \}$ a basis of $\mathbb{F}_{q^m}$ over
$\mathbb{F}_q$. If $v = z_1 \cdot b_1 + \cdots + z_m \cdot b_m \in
      \mathbb{F}_{q^m}$, we denote $\mathbf{v}= (z_1, \cdots, z_m) \in
      (\mathbb{F}_q)^m$. Let $V = (\mathbb{F}_{q^m})^n$ and let $\zeta = (v_1,
      \cdots, v_n) \in V$ with $v_i = z_{1, i} \cdot b_1 + \cdots + z_{m, i} \cdot
      b_m$ then, we define:
\[ M_{\zeta} = \left(\begin{array}{ccc}
            z_{1, 1} & \cdots & z_{1, n} \\
            \vdots   &        & \vdots   \\
            z_{m, 1} & \cdots & z_{m, n}
      \end{array}\right) . \]
The map $\mathcal{M}: \zeta \in V \longmapsto \mathcal{M}_{\zeta} \in
      \mathcal{M}_{m \times n} (\mathbb{F}_q)$ is a linear isomorphism in a way that
every subspace $W$ of $V$ can be identified to a subspace $\mathcal{M} (W)$ of matrices. Conversely, every $\mathbb{F}_q$-linear subspace in
$\mathcal{M}_{m \times n} (\mathbb{F}_q)$ can be identified with a  $\Fq$-subvector space of $V$.

\begin{definition}
      Rank Metric: Let $V$ and $W$ be two finite dimensional $\mathbb{F}_q$-vector
      spaces and let $c \in End (V, W)$, we define the rank weight $w_r (c)$
      of $c$ as its rank considered as an endomorphism, i.e. $w_r (c) :=
            \tmop{rank} (c)$. For two linear maps $c$ and $c' \in \tmop{End} (V, W)$, the rank distance between them is defined as $d_r (c, c') : = w_r (c - c')$.
\end{definition}

We refer to \cite{bartz_rank-metric_2022} for the following theorem:

\begin{theorem}
      The map $d_r$ is a distance on $\tmop{End} (V, W)$.
\end{theorem}

We now define rank-metric codes:

\begin{definition}
      A rank-metric code is a subset of $\mathbb{F}_q$-linear morphisms, i.e.
      $\mathcal{C}$ is a rank-metric code if there exist $V$ and $W$ two finite
      dimension $\mathbb{F}_q$-vector spaces such that $\mathcal{C} \subset
            \tmop{End} (V, W)$ considered with the rank distance. The code $\mathcal{C}$
      is said to be linear if it is a linear subspace of $\tmop{End} (V, W)$.
\end{definition}

\begin{definition}
      A rank-metric code is said to be matricial if it is a subvector space of a matrix space over $\Fq$.\\
      A rank-metric code is said to be vectorial if it is a subvector space of $\Fqm^n$. In this case, this is also an $\Fqm$-vector space.
\end{definition}

Remark that if $C\subset \mathbb{F}_{q^m}^n$ is a vector rank-metric code of dimension $k$ over $\mathbb{F}_{q^m}$, then $\mathcal{M}(C)$ is a matrix rank-metric code of dimension $mk$ over $\mathbb{F}_q$, and the map $\mathcal{M}$ keeps track of $\Fqm$ linearity. The $\Fqm$ linearity is the core idea of the "Overbeck's attack" on cryptosystems based on vectorial codes, see \cite{overbeck2008structural}. \\
\color{black}

\noindent The notion of support introduced here is very different from the notion of support in Hamming metric. For a vectorial rank-metric code:

\begin{definition}
      Let $\mathbf{v}=(v_1,\cdots,v_n) \in \Fqm^n$ we define the support of $\mathbf{v}$ to be $ supp(\mathbf{v})=\langle v_1,\cdots , v_n\rangle \subset \Fqm$, the subvector space of $\Fqm$ spanned by the coefficients of $\mathbf{v}$.
\end{definition}

In the case of a matricial rank-metric code we define:

\begin{definition}
      Let $\mathbf{m} \in \mathcal{M}_{m \times n}(\Fq)$ the support of $\mathbf{m}$ is the vector space spanned by its columns, i.e. $ supp(\mathbf{m})=Im(\mathbf{m})$.
\end{definition}

This notion of support is central for the decoding algorithm and the weight of a codeword is the dimension of its support as it is summarize in following lemma:
\begin{lemma}
      Let $\mathcal{C} \subset \Fqm^n$ be a rank-metric code and let $\mathbf{c} \in \mathcal{C}$, then $w_r(\mathbf{c})=dim_{\Fq}( supp(\mathbf{c}))$.
\end{lemma}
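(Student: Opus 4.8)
The plan is to show the two numbers $w_r(\mathbf{c})$ and $\dim_{\GF{q}}(\supp(\mathbf{c}))$ agree by unwinding both definitions through the coordinate isomorphism $\mathcal{M}$ introduced earlier. Recall that a codeword $\mathbf{c} = (c_1,\dots,c_n) \in \GF{q^m}^n$ corresponds, after fixing the basis $\{b_1,\dots,b_m\}$, to the matrix $M_{\mathbf{c}} \in \mathcal{M}_{m\times n}(\GF{q})$ whose $j$-th column is the coordinate vector $\mathbf{c_j}$ of $c_j$. The rank weight $w_r(\mathbf{c})$ is, by definition, the rank of the associated $\GF{q}$-linear map, which is exactly $\operatorname{rank}(M_{\mathbf{c}})$; equivalently it is $\dim_{\GF{q}} \operatorname{Im}(M_{\mathbf{c}})$, the dimension of the column span of $M_{\mathbf{c}}$ inside $(\GF{q})^m$.

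First I would observe that the map $\GF{q^m} \to (\GF{q})^m$, $v \mapsto \mathbf{v}$, is an $\GF{q}$-linear isomorphism, since it is precisely the coordinate map attached to the basis $\{b_1,\dots,b_m\}$. Consequently it sends the $\GF{q}$-subspace $\supp(\mathbf{c}) = \langle c_1,\dots,c_n\rangle \subseteq \GF{q^m}$ isomorphically onto the $\GF{q}$-subspace $\langle \mathbf{c_1},\dots,\mathbf{c_n}\rangle \subseteq (\GF{q})^m$. But the latter space is exactly the column span of $M_{\mathbf{c}}$, i.e. $\operatorname{Im}(M_{\mathbf{c}})$. Because an isomorphism of $\GF{q}$-vector spaces preserves dimension, we get
\[
\dim_{\GF{q}}\bigl(\supp(\mathbf{c})\bigr) = \dim_{\GF{q}}\bigl(\operatorname{Im}(M_{\mathbf{c}})\bigr) = \operatorname{rank}(M_{\mathbf{c}}) = w_r(\mathbf{c}),
\]
which is the claimed identity.

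Then I would note that the only subtlety worth spelling out is the consistency between the two notions of support in play: the ``vectorial'' support $\supp(\mathbf{c}) = \langle c_1,\dots,c_n\rangle$ and the ``matricial'' support $\supp(M_{\mathbf{c}}) = \operatorname{Im}(M_{\mathbf{c}})$ are matched under $\mathcal{M}$, which is the content of the middle equality above and follows immediately from the fact that the columns of $M_{\mathbf{c}}$ are the coordinate vectors of the $c_j$. I do not anticipate a genuine obstacle here; the main point is simply to be careful that the rank weight $w_r$ of a vectorial codeword is, by the earlier definitions, computed via $\mathcal{M}$ as the rank of the corresponding matrix, after which everything reduces to the elementary fact that a linear isomorphism preserves the dimension of a subspace. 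If one prefers to avoid invoking $\mathcal{M}$ explicitly, an alternative is to argue directly that $c_{j} = \sum_{i} z_{i,j} b_i$ shows $\langle c_1,\dots,c_n\rangle$ and $\langle \text{columns of } M_{\mathbf{c}}\rangle$ have equal dimension because a linear dependence among the $c_j$ over $\GF{q}$ is, coordinate by coordinate in the basis $\{b_i\}$, the same as a linear dependence among the columns.
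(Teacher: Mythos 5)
Your proof is correct and takes the same route the paper has in mind: the paper simply asserts that the identity ``follows straightforwardly from the definitions of support and rank-weight,'' and your argument is precisely the careful unwinding of those definitions through the coordinate isomorphism $\mathcal{M}$, matching the vectorial support $\langle c_1,\dots,c_n\rangle$ with the column span $\operatorname{Im}(M_{\mathbf{c}})$ and invoking that a linear isomorphism preserves dimension. Nothing is missing; you have supplied the details the paper leaves implicit.
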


The proof follows straightforwardly from the definitions of support and rank weight.

\noindent We now extend the notion of rank-metric codes to a more general setting where codewords are partitioned into blocks, each evaluated under the rank metric.

\noindent The sum-rank metric generalizes both the Hamming and rank metrics: it reduces to the Hamming metric when all blocks have size 1 and to the rank metric when there is only one block. Each codeword consists of a set of blocks, each block equipped with the rank
metric and the weight of a code word is the sum of the rank weight of its
blocks. In the case where all the blocks has size one, we have the classical
Hamming weight and if there is only one block, we have to do with the rank
weight. For a general presentation of sum-rank metric, we refer to \cite{martinez-penas_codes_2022}.

\begin{definition}
      Let $V$ and $W_1, \cdots, W_l$ be some finite dimensional
      $\mathbb{F}_q$-vector spaces and let $c = (c_1, \cdots, c_l) \in
            \tmop{End}_{\mathbb{F}_q} \left( V, \underset{i = 1}{\overset{l}{\oplus}}
            W_i \right) = \underset{i = 1}{\overset{l}{\oplus}}
            \tmop{End}_{\mathbb{F}_q} (V, W_i)$ where $c_i \in \tmop{End}_{\mathbb{F}_q}
            (V, W_i)$ then we define the sum-rank weight of $c$, $w_{s r} (c) =
            \overset{l}{\underset{i = 1}{\sum}} w_r (c_i)$. If $c' \in
            \tmop{End}_{\mathbb{F}_q} \left( V, \underset{i = 1}{\overset{l}{\oplus}}
            W_i \right)$ we define $d_{s r} (c, c') = w_{s r} (c - c')$.
\end{definition}

As the sum of distances is a distance, we have:

\begin{theorem}
      The map $d_{s r}$ is a distance on $\tmop{End} \left( V, \underset{i =
                  0}{\overset{l}{\oplus}} W_i \right)$.
\end{theorem}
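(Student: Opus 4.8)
The plan is to reduce the claim to the already-established fact that $d_r$ is a distance on each $\tmop{End}_{\mathbb{F}_q}(V,W_i)$, and then to use the general principle that a finite sum of distances (on a product set) is again a distance. Concretely, for $c=(c_1,\dots,c_l)$ and $c'=(c_1',\dots,c_l')$ in $\tmop{End}_{\mathbb{F}_q}\!\left(V,\bigoplus_{i=1}^l W_i\right)=\bigoplus_{i=1}^l \tmop{End}_{\mathbb{F}_q}(V,W_i)$ we have by definition $d_{sr}(c,c')=\sum_{i=1}^l d_r(c_i,c_i')$, so the three distance axioms for $d_{sr}$ follow termwise from those for $d_r$.

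First I would check non-negativity and the separation axiom: each $d_r(c_i,c_i')=w_r(c_i-c_i')=\tmop{rank}(c_i-c_i')\ge 0$, so the sum is $\ge 0$; and $d_{sr}(c,c')=0$ forces every $d_r(c_i,c_i')=0$, hence $c_i-c_i'$ has rank $0$, i.e. $c_i=c_i'$ for all $i$, i.e. $c=c'$ (the converse being immediate). Next, symmetry: since $\tmop{rank}(-A)=\tmop{rank}(A)$ for any linear map $A$, each $d_r$ is symmetric, and a sum of symmetric functions is symmetric, so $d_{sr}(c,c')=d_{sr}(c',c)$. Finally, the triangle inequality: for a third element $c''=(c_1'',\dots,c_l'')$, apply the triangle inequality for $d_r$ in each coordinate, $d_r(c_i,c_i'')\le d_r(c_i,c_i')+d_r(c_i',c_i'')$, and sum over $i=1,\dots,l$ to obtain $d_{sr}(c,c'')\le d_{sr}(c,c')+d_{sr}(c',c'')$.

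There is no real obstacle here; the statement is essentially the observation already flagged in the text (``As the sum of distance is a distance''). The only point requiring a word of care is that the identification $\tmop{End}_{\mathbb{F}_q}\!\left(V,\bigoplus_i W_i\right)\cong\bigoplus_i \tmop{End}_{\mathbb{F}_q}(V,W_i)$ is the canonical one sending $c$ to its tuple of components $c_i=\pi_i\circ c$, and that subtraction is computed componentwise under this identification, so that $w_{sr}(c-c')=\sum_i w_r(c_i-c_i')$ genuinely equals $\sum_i d_r(c_i,c_i')$; once this bookkeeping is in place the three axioms are inherited coordinatewise from Theorem (the rank-distance theorem) exactly as above.
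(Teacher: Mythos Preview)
Your proof is correct and matches the paper's own approach exactly: the paper does not give a detailed argument but simply observes ``As the sum of distance is a distance, we have'' and states the theorem, which is precisely the reduction you carry out. Your componentwise verification of the three axioms is just an explicit unpacking of that remark.
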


\noindent While the codes introduced in this work are inherently rank-metric codes, they are even more fundamentally characterized as sum-rank-metric codes.\\

In this work, we consider an adapted error model suitable for rank and sum-rank-metric. When we say that an error $\mathbf{e} \in \Fqm^n$ of weight $r$ is uniform, we mean that $\mathbf{e}$ is chosen uniformly at random from the set of elements in $\Fqm^n$ having (rank or sum-rank) weight $r$. For instance, a uniform error of rank $r$ is a word in $\Fqm^n$ choosen uniformly at random among all  words with support of dimension $r$.
It is a classical assumption in cryptography (see for instance\cite{aragon_minrank_2025-1}, \cite{gaborit_low_2013}).
\color{black}

\section{Linearized polynomials} \label{linpol}

In this section, we recall results on univariate linearized polynomials and linearized polynomial rings, building upon foundational work by \"{O}re on non-commutative polynomials, see \cite{ore_special_1933} and  \cite{ore_theory_1933}. For a comprehensive presentation of these algebraic structures, we refer interested readers to the works of Ruatta \cite{ruatta_polynomes_2022} and Caruso \cite{caruso_polynomes_nodate}.
The ring of linearized polynomials is isomorphic to a skew polynomial ring, see for example \cite{GluesingLuerssen}. Algebraic properties of skew polynomial
rings can therefore be directly translated into the setting of linearized polynomials. The Euclidean division, the algorithm to compute the right greatest
common divisor, as well as related results, follow from the general theory on skew polynomials see \cite{ore_special_1933}, \cite{ore_theory_1933} and \cite {jacobson2009}.\color{black}

To facilitate our subsequent developments, we state and prove two key results: the Chinese Remainder Theorem (CRT) and the lifting of the CRT, tailored to linearized polynomial rings.   Although the Chinese Remainder Theorem is well established, the lifting using Bézout relations and specifically the required hypotheses are not explicit in the literature. \color{black} These constructive statements enable us to efficiently manipulate and construct elements within these rings, which are essential for the construction of our new code family.

We consider
\[
      \mathbb{F}_{q^m}\langle X^q \rangle
      :=
      \left\{
      p(X) \in \mathbb{F}_{q^m}[X]
      ~|~
      \exists d \in \mathbb{N},\exists
      p_0,\dots,p_d \in \mathbb{F}_{q^m}
      \text{ s.t. }
      p(X) = \sum_{i=0}^d p_i X^{q^i}
      \right\}.
\]
which is the
sub-$\tmname{$\mathbb{F}_{q^m}$}$-vector space of $\mathbb{F}_{q^m} [X]$
generated by the monomials $X^{q^i}$ with $i \in \mathbb{N}$. We call this set the set of linearized polynomials over $\Fqm,$ or the set of $q$-polynomials over $ \Fqm$. For all $i \in
      \mathbb{N}$ the map $a \in \mathbb{F}_{q^m} \mapsto a^{q^i}$ is a
$\mathbb{F}_q$-linear map and it will play an important role later. At first, it will be used to motivate the fact to consider composition on
linearized polynomials.

\noindent If we denote by $\theta^j : \left\{\begin{array}{l}
            \mathbb{F}_{q^m} \longrightarrow \mathbb{F}_{q^m} \\
            \zeta \longmapsto \zeta^{q^j}
      \end{array}\right.$ for every $j \in \mathbb{N}$, then for every $\zeta \in
      \mathbb{F}_{q^m}$ and every $P = \underset{i = 0}{\overset{d}{\sum}} p_i \cdot
      X^{q^i} \in \mathbb{F}_{q^m} \langle X^q \rangle$ we define $P (\zeta) =
      \underset{}{\overset{d}{\underset{i = 0}{\sum}} p_i \cdot \theta^i (\zeta) =
            \overset{d}{\underset{i = 0}{\sum}} p_i \cdot \zeta^{q^i}}$.

\begin{lemma}
      \label{linmap}Let $P \in \mathbb{F}_{q^m} \langle X^q \rangle$, then the map
      $\zeta \longmapsto P (\zeta)$ is a $\mathbb{F}_q$-linear map.
\end{lemma}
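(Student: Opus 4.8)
The plan is to verify $\mathbb{F}_q$-linearity of $\zeta \mapsto P(\zeta)$ directly from the definition of $P$ as an $\mathbb{F}_{q^m}$-linear combination of the monomials $X^{q^i}$, reducing the claim to the two defining properties of a linear map (additivity and $\mathbb{F}_q$-homogeneity) for each such monomial. Concretely, write $P = \sum_{i=0}^d a_i X^{q^i}$, so that $P(\zeta) = \sum_{i=0}^d a_i \zeta^{q^i} = \sum_{i=0}^d a_i \theta^i(\zeta)$. Since a finite $\mathbb{F}_{q^m}$-linear (hence in particular $\mathbb{F}_q$-linear) combination of $\mathbb{F}_q$-linear maps is again $\mathbb{F}_q$-linear, it suffices to show that each $\theta^i : \zeta \mapsto \zeta^{q^i}$ is $\mathbb{F}_q$-linear; this is precisely the fact already noted in the text preceding Lemma~\ref{linmap}, so strictly speaking the lemma is immediate, but I would include the verification for completeness.

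First I would check additivity: for $\zeta, \xi \in \mathbb{F}_{q^m}$, the Frobenius endomorphism $x \mapsto x^q$ of $\mathbb{F}_{q^m}$ satisfies $(\zeta+\xi)^q = \zeta^q + \xi^q$ because $\mathbb{F}_{q^m}$ has characteristic $p$ (where $q$ is a power of $p$) and all the intermediate binomial coefficients $\binom{q}{k}$ for $0<k<q$ are divisible by $p$; iterating $i$ times gives $(\zeta+\xi)^{q^i} = \zeta^{q^i} + \xi^{q^i}$, i.e. $\theta^i(\zeta+\xi) = \theta^i(\zeta) + \theta^i(\xi)$. Next I would check $\mathbb{F}_q$-homogeneity: for $\lambda \in \mathbb{F}_q$ we have $\lambda^q = \lambda$ (every element of $\mathbb{F}_q$ is a root of $X^q - X$), hence $\lambda^{q^i} = \lambda$ by induction on $i$, and therefore $\theta^i(\lambda\zeta) = (\lambda\zeta)^{q^i} = \lambda^{q^i}\zeta^{q^i} = \lambda\,\theta^i(\zeta)$. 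Combining the two, each $\theta^i$ is $\mathbb{F}_q$-linear.

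Finally I would assemble the pieces: for $\zeta, \xi \in \mathbb{F}_{q^m}$ and $\lambda \in \mathbb{F}_q$,
\[
P(\lambda\zeta + \xi) = \sum_{i=0}^d a_i\,\theta^i(\lambda\zeta+\xi) = \sum_{i=0}^d a_i\bigl(\lambda\,\theta^i(\zeta) + \theta^i(\xi)\bigr) = \lambda \sum_{i=0}^d a_i\,\theta^i(\zeta) + \sum_{i=0}^d a_i\,\theta^i(\xi) = \lambda\,P(\zeta) + P(\xi),
\]
which is exactly $\mathbb{F}_q$-linearity. There is no real obstacle here; the only point worth flagging is that homogeneity holds only for scalars in $\mathbb{F}_q$ and not for all of $\mathbb{F}_{q^m}$ — this is the reason the map is merely $\mathbb{F}_q$-linear and not $\mathbb{F}_{q^m}$-linear, and it is precisely what makes linearized polynomials the right tool for the rank metric. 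I would make sure the write-up keeps the distinction between the $\mathbb{F}_{q^m}$-scalars $a_i$ (the coefficients of $P$) and the $\mathbb{F}_q$-scalars $\lambda$ (acting on the argument) visually clear.
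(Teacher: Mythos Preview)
Your proof is correct and follows exactly the approach the paper indicates: the paper does not give a formal proof but simply remarks that the lemma ``is a simple consequence that $\theta$ is linear,'' and you have spelled out precisely that consequence (Frobenius additivity plus $\lambda^{q}=\lambda$ for $\lambda\in\mathbb{F}_q$, then taking $\mathbb{F}_{q^m}$-linear combinations). There is nothing to add.
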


\noindent It is a simple consequence of the fact that the Frobenius map $\theta=\theta^1$ is linear. If $P \in \mathbb{F}_{q^m}
      \langle X^q \rangle$, we still denote $P$ for the associated linear map to
$P$ by Lemma \ref{linmap}.

We define the support of a linearized polynomial as the support of its coefficients vector.

\subsection{Ring structure}

We consider the composition $\circ : \left\{\begin{array}{l}
            \mathbb{F}_{q^m} \langle X^q \rangle \times \mathbb{F}_{q^m} \langle X^q
            \rangle \longrightarrow \mathbb{F}_{q^m} \langle X^q \rangle \\
            (P, Q) \longmapsto P \circ Q
      \end{array}\right.$ which can be considered as a product induced by $(a \cdot
      X^{q^i}) \circ (b \cdot X^{q^j}) = a \cdot b^{q^i} \cdot X^{q^{i + j}}$. The
set $(\mathbb{F}_{q^m} \langle X^q \rangle, +, \circ)$ is a non-commutative
$\mathbb{F}_{q}$-algebra. We will denote by $(P)_l$ the left ideal generated by an element $P$. \color{black} We fix some notations, allowing us to see that this
algebra is a right Euclidean ring (both on the right and left side, but we will
focus on division on the right).

\noindent Let $P = \underset{i = 0}{\overset{d}{\sum}} a_i \cdot X^{q^i} \in
      \mathbb{F}_{q^m} \langle X^q \rangle$, this linearized polynomial is said to
be of $q$-degree $d$ if $a_d \neq 0$ and we denote $\deg_q(P) = d$. We
assume now that $\deg_q(P) = d$, then its leading term $\tmop{LT} (P)$
is $a_d \cdot X^{q^d}$, its leading coefficient $\tmop{LC} (P)$ is $a_d$ and
its leading monomial is $\tmop{LM} (P) = X^{q^d}$. To define Euclidean
division on the right between linearized polynomials, we first explain how it
works for terms just like the product. Let $i > j$ be two integers and $a$ and
$b \in \mathbb{F}_{q^m}$ with $b \neq 0$, we have $a \cdot X^{q^i} = c \cdot
      X^{q^{i - j}} \circ b \cdot X^{q^j} = c \cdot b^{q^{i - j}} \cdot X^{q^i}$ and
then we need to have $a = c \cdot b^{q^{i - j}}$ in a way that if $c = a \cdot
      b^{q^{j - i}}$ then $a \cdot X^{q^i} = c \cdot X^{q^{i - j}} \circ b \cdot
      X^{q^j}$ and we denote $c \cdot X^{q^{i - j}} = \tmop{rquo} (a \cdot X^{q^i},
      b \cdot X^{q^j})$ the quotient of the right division of $a \cdot X^{q^i}$ by
$b \cdot X^{q^j}$. This allows us to define an Euclidean right division
between linearized polynomials. We give now an algorithm for the right division
of a linearized $A$ and a linearized polynomial $B$.

\begin{algorithm}
      \caption{Algorithm for $q$-polynomial right  division (RQUOREM)}
      \label{rqorem}
      \begin{algorithmic}[1]
            \STATE \bf{Input:} $A$ and $B$ of $q$-degree respectively $d_A$ and $d_B$.
            \STATE \bf{Output:} $Q$ and $R$ such that $A = Q \circ B + R$ where the $q$-degree of $R$ is strictly less than the $q$-degree of $B$.
            \STATE $R \gets A$
            \STATE $Q \gets 0$
            \STATE $d_R \gets d_A$
            \WHILE{$d_B \leqslant d_R$}
            \STATE $Q \gets Q + \operatorname{LC}(R) \cdot \operatorname{LC}(B)^{q^{d_B - d_R}} \cdot X^{q^{d_R - d_B}}$
            \STATE $R \gets R - \bigl( \operatorname{LC}(R) \cdot \operatorname{LC}(B)^{q^{d_B - d_R}} \cdot X^{q^{d_R - d_B}} \bigr) \circ B$
            \STATE $d_R \gets \deg_q(R)$
            \ENDWHILE
            \RETURN $(Q,R)$
      \end{algorithmic}
\end{algorithm}

\begin{lemma}
      The result of the algorithm $\text{rquorem} (A, B)$ is a pair $(Q, R)$ such
      that $A = Q \circ B + R$ with $\deg_q(R)< \deg_q(B)$ and if
      $R = 0$ then $B$ divides $A$ on the right.
\end{lemma}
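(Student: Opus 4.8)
The plan is to analyze the \texttt{rquorem} algorithm as a straightforward loop-invariant argument, in the same spirit as the classical polynomial division proof, adapted to the non-commutative composition product $\circ$ and the twisted scalars $b^{q^i}$. The central claim to establish is that the identity $A = Q \circ B + R$ holds at every iteration of the \textbf{while} loop, and that the quantity $d_R = \deg_q(R)$ strictly decreases at each pass, which simultaneously guarantees termination and the degree bound on the output.

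First I would verify the loop invariant. At initialization, $R \leftarrow A$, $Q \leftarrow 0$, so $A = Q \circ B + R$ trivially holds. Assuming it holds at the start of an iteration with current values $Q$ and $R$, the algorithm forms the term $T = \LC(R)\cdot\LC(B)^{q^{d_B - d_R}} X^{q^{d_R - d_B}}$ and updates $Q \leftarrow Q + T$, $R \leftarrow R - T \circ B$. Then the new right-hand side is $(Q + T)\circ B + (R - T\circ B) = Q\circ B + T\circ B + R - T\circ B = Q\circ B + R = A$ by the induction hypothesis; here I use only that $\circ$ is right-distributive over $+$, i.e.\ $(Q+T)\circ B = Q\circ B + T\circ B$, which follows from the definition of the product on terms extended by bilinearity. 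So the invariant is preserved.

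The key point — and the part that deserves the most care — is showing that the leading term of $R$ is cancelled by subtracting $T \circ B$, so that $\deg_q(R)$ strictly drops. By the term-division rule recalled just before the algorithm, $T \circ B$ has leading term $\left(\LC(R)\cdot\LC(B)^{q^{d_B-d_R}} X^{q^{d_R-d_B}}\right)\circ\left(\LC(B) X^{q^{d_B}}\right) = \LC(R)\cdot\LC(B)^{q^{d_B-d_R}}\cdot\LC(B)^{q^{d_R-d_B}}\cdot X^{q^{d_R}}$, and since $\LC(B)^{q^{d_B-d_R}\cdot q^{d_R-d_B}} = \LC(B)^{q^0} = \LC(B)$... more precisely the two twisted exponents compose additively so that $\bigl(\LC(B)^{q^{d_B-d_R}}\bigr)^{q^{d_R-d_B}} = \LC(B)^{q^{(d_B - d_R)+(d_R - d_B)}} = \LC(B)$ (exponent $0$), hence the leading term of $T\circ B$ equals $\LC(R)\cdot X^{q^{d_R}} = \LT(R)$. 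Therefore $R - T\circ B$ has $q$-degree strictly less than $d_R$, so $d_R$ decreases by at least one each iteration. Since $q$-degrees are non-negative integers, the loop runs at most $d_A - d_B + 1$ times and terminates; at exit, the loop condition fails, i.e.\ $d_R < d_B$, which is exactly $\deg_q(R) < \deg_q(B)$.

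Finally, combining the invariant at exit with the terminal degree bound gives $A = Q\circ B + R$ with $\deg_q(R) < \deg_q(B)$, proving the first assertion. For the last assertion: if $R = 0$, then $A = Q \circ B$, which is precisely the statement that $B$ divides $A$ on the right. I expect the only subtlety to be the bookkeeping of the twisted exponents $q^{d_B - d_R}$ and $q^{d_R - d_B}$ — one must be careful that these are the exponents needed so that the Frobenius twists cancel, and in particular that $d_R \geq d_B$ inside the loop guarantees the exponent $d_R - d_B \geq 0$ on $X$ is a genuine monomial; the exponent $d_B - d_R$ on $\LC(B)$ is negative but that is fine since raising to $q^{-k}$ is well defined on $\GF{q^m}$ as the inverse of the (bijective) Frobenius, consistent with the term-division convention $c = a\cdot b^{q^{j-i}}$ set up immediately before the algorithm.
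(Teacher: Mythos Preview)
The paper states this lemma without proof, so there is no argument to compare against; your overall structure --- the loop invariant $A = Q\circ B + R$ together with a strict decrease of $\deg_q(R)$ at each pass --- is the standard and correct route, and your treatment of the invariant, termination, and the final $R=0$ clause is fine.

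There is, however, a concrete slip in your leading-term cancellation. You correctly obtain that the leading coefficient of $T\circ B$ is
\[
\LC(R)\cdot\LC(B)^{q^{d_B-d_R}}\cdot\LC(B)^{q^{d_R-d_B}},
\]
which is a \emph{product} of two field elements. You then try to simplify it by invoking the iterated-Frobenius identity $\bigl(\LC(B)^{q^{d_B-d_R}}\bigr)^{q^{d_R-d_B}}=\LC(B)$. That identity is true, but it is irrelevant: a product $b^{q^{-k}}\cdot b^{q^{k}}$ (reading $b^{q^{-k}}$ as the inverse-Frobenius image $\theta^{-k}(b)$, as you yourself propose in the last paragraph) equals $b^{q^{-k}+q^{k}}$, which is neither $1$ nor $b$ in general. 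So the step from your displayed coefficient to ``$\LC(R)\cdot X^{q^{d_R}}$'' does not follow --- and even your own side computation yields $\LC(B)$, not $1$, which would leave a spurious factor $\LC(B)$ in the leading coefficient.

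This actually reflects a typo in the paper's algorithm and in the term-division paragraph just before it: for the leading term to cancel one needs $T=\LC(R)\cdot\LC(B)^{-q^{d_R-d_B}}\,X^{q^{d_R-d_B}}$, i.e.\ the coefficient should be $\LC(R)/\LC(B)^{q^{d_R-d_B}}$ (this is also what the paper's own relation $a=c\cdot b^{q^{i-j}}$ forces when solved for $c$). With that coefficient, $T\circ\LT(B)$ has leading coefficient $\LC(R)\cdot\LC(B)^{-q^{d_R-d_B}}\cdot\LC(B)^{q^{d_R-d_B}}=\LC(R)$, the cancellation goes through, and the rest of your argument is unchanged.
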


\noindent The right division plays a special role with respect to the associated
$\mathbb{F}_q$-linear map associated to a linearized polynomial by Lemma
\ref{linmap}.

\begin{lemma}
      \label{lemeval}Let $\zeta \in \mathbb{F}_{q^m}$ then for all $P \in
            \mathbb{F}_{q^m} \langle X^q \rangle$ the remainder of the right
      division of $P$ by $X^q - \zeta^{q - 1} \cdot X$ is equal to $ \zeta^{-1}\cdot P (\zeta)
            \cdot X$.
\end{lemma}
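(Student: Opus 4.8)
The plan is to exploit the fact that the divisor $B := X^q - \zeta^{q-1}\cdot X$ has $q$-degree exactly $1$ (here I assume $\zeta \in \mathbb{F}_{q^m}^\ast$; the case $\zeta = 0$ is degenerate for the stated formula and is handled separately by noting that the remainder of the right division of $P=\sum_i p_i X^{q^i}$ by $X^q$ is simply $p_0\cdot X$). By the division lemma for $\mathrm{rquorem}$, writing $P = Q\circ B + R$ the remainder $R$ has $q$-degree strictly less than $1$, hence $R = r\cdot X$ for some $r\in\mathbb{F}_{q^m}$ (with $r=0$ allowed). So the whole statement reduces to identifying the scalar $r$, and for this I would evaluate the identity $P = Q\circ B + R$ at the point $\zeta$.

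The key auxiliary observation I would record first is that evaluation is compatible with composition, i.e. $(A\circ B)(\zeta) = A\big(B(\zeta)\big)$ for all $A,B\in\mathbb{F}_{q^m}\langle X^q\rangle$. This is checked on monomials using the defining rule $(a\cdot X^{q^i})\circ(b\cdot X^{q^j}) = a\cdot b^{q^i}\cdot X^{q^{i+j}}$, since evaluating the left side at $\zeta$ gives $a\cdot b^{q^i}\cdot \zeta^{q^{i+j}}$, while $(a\cdot X^{q^i})\big(b\cdot\zeta^{q^j}\big) = a\cdot(b\cdot\zeta^{q^j})^{q^i} = a\cdot b^{q^i}\cdot\zeta^{q^{i+j}}$; the general case follows by $\mathbb{F}_{q^m}$-bilinearity of $\circ$ and additivity of evaluation. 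Next I would note that $\zeta$ is a root of $B$, namely $B(\zeta) = \zeta^q - \zeta^{q-1}\cdot\zeta = 0$.

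Putting these together: evaluating $P = Q\circ B + R$ at $\zeta$ yields
\[
P(\zeta) = Q\big(B(\zeta)\big) + R(\zeta) = Q(0) + r\cdot\zeta = r\cdot\zeta,
\]
where $Q(0)=0$ because $Q$ is $\mathbb{F}_q$-linear by Lemma \ref{linmap}, and $R(\zeta) = r\cdot\zeta^{q^0} = r\cdot\zeta$. Since $\zeta\neq 0$ is invertible in $\mathbb{F}_{q^m}$, this gives $r = \zeta^{-1}\cdot P(\zeta)$, hence $R = \zeta^{-1}\cdot P(\zeta)\cdot X$, as claimed.

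Honestly there is no real obstacle here: the only points requiring a line of justification are the compatibility of evaluation with composition (routine, as sketched above) and the implicit hypothesis $\zeta\neq 0$, which should be stated or the $\zeta=0$ case dispatched explicitly. Everything else is immediate from the degree bound on the remainder and Lemma \ref{linmap}.
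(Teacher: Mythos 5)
Your proof is correct, and since the paper states Lemma~\ref{lemeval} without supplying a proof, there is nothing to compare against: the argument you give — reduce the remainder to a scalar multiple of $X$ by the degree bound, then evaluate the identity $P = Q \circ B + R$ at $\zeta$ using the composition--evaluation compatibility $(A\circ B)(\zeta)=A(B(\zeta))$ and the fact that $\zeta$ is a root of $B$, with $Q(0)=0$ from Lemma~\ref{linmap} — is the natural one, and you are right to flag the implicit hypothesis $\zeta \neq 0$, which the paper leaves unstated even though the formula $\zeta^{-1}P(\zeta)X$ is meaningless otherwise. One small imprecision worth fixing: $\circ$ is not $\mathbb{F}_{q^m}$-bilinear (it is $\mathbb{F}_{q^m}$-linear only in the left factor, and $\mathbb{F}_q$-linear in the right, since $(aX^{q^i})\circ(\lambda bX^{q^j})=a\lambda^{q^i}b^{q^i}X^{q^{i+j}}$), but your reduction to the monomial case only actually uses biadditivity of $\circ$ together with additivity of evaluation and of Frobenius, so the proof goes through unchanged once the word ``bilinearity'' is replaced by ``biadditivity.''
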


\begin{corollary}
      If $P (\zeta) = 0$ or equivalently if $\zeta \in \ker (P)$ then $X^q -
            \zeta^{q - 1} \cdot X$ divides $P$ on the right.
\end{corollary}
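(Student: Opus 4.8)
The plan is to read this off directly from Lemma~\ref{lemeval}. First I would perform the right division of $P$ by $B := X^q - \zeta^{q-1}\cdot X$, which by the \textbf{rquorem} algorithm produces a quotient $Q \in \mathbb{F}_{q^m}\langle X^q\rangle$ and a remainder $R$ with $\deg_q(R) < \deg_q(B) = 1$; hence $R = r\cdot X$ for some $r \in \mathbb{F}_{q^m}$. Lemma~\ref{lemeval} identifies this remainder precisely: $r\cdot X = \zeta^{-1}\cdot P(\zeta)\cdot X$. So we have the identity $P = Q\circ\bigl(X^q - \zeta^{q-1}\cdot X\bigr) + \zeta^{-1}\cdot P(\zeta)\cdot X$.

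Next I would invoke the hypothesis. If $P(\zeta) = 0$, the remainder term $\zeta^{-1}\cdot P(\zeta)\cdot X$ vanishes, leaving $P = Q\circ\bigl(X^q - \zeta^{q-1}\cdot X\bigr)$, which is exactly the assertion that $X^q - \zeta^{q-1}\cdot X$ divides $P$ on the right (in the sense of the \textbf{rquorem} lemma, i.e.\ the remainder is $0$). The equivalent formulation ``$\zeta \in \ker(P)$'' is nothing more than the definition of the kernel of the $\mathbb{F}_q$-linear map attached to $P$ via Lemma~\ref{linmap}, so the two phrasings of the hypothesis may be used interchangeably.

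There is essentially no obstacle: the entire content is supplied by Lemma~\ref{lemeval}, and the corollary is immediate once that lemma is in hand. The single point deserving a word of care is that $\zeta$ is tacitly assumed nonzero — this is already implicit in Lemma~\ref{lemeval}, where $\zeta^{-1}$ appears — since for $\zeta = 0$ one has $X^q - \zeta^{q-1}\cdot X = X^q$, and although every linearized polynomial vanishes at $0$, the monomial $X^q$ need not right-divide $P$ (for instance when $P$ has a nonzero $X$-coefficient). With that caveat noted, the proof is a one-line consequence of the preceding lemma.
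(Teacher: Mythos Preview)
Your proposal is correct and matches the paper's approach exactly: the paper states this as an immediate corollary of Lemma~\ref{lemeval} without any written proof, and your argument spells out precisely the one-line deduction the paper has in mind. Your remark on the implicit hypothesis $\zeta \neq 0$ is a welcome clarification that the paper itself leaves unspoken.
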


\noindent As a consequence of the existence of the right Euclidean division we have :

\begin{proposition}
      The ring $(\mathbb{F}_{q^m} \langle X^q \rangle, +, \circ)$ is right
      Euclidean and then:
      \begin{enumerate}
            \item Every left ideal is principal ($\mathbb{F}_{q^m} \langle X^q
                        \rangle$ is right principal).

            \item If $A$ and $B \in (\mathbb{F}_{q^m} \langle X^q \rangle, +, \circ)$
                  then $A$ and $B$ admit a greatest right common divisor denoted $A \wedge_r
                        B$ and a left least common multiple $A \vee_l B$.
      \end{enumerate}
\end{proposition}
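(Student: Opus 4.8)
The statement is the standard consequence "right-Euclidean $\Rightarrow$ right-PID $\Rightarrow$ existence of gcrd/llcm", transported to the non-commutative ring $\QP$. The plan is to mimic the classical proof for $\mathbb{F}[x]$, being careful about sides throughout (divisions and remainders on the right, ideals on the left).

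For (1): I would take a nonzero left ideal $I \subseteq \QP$ and pick $G \in I$ of minimal $q$-degree (such $G$ exists since $q$-degrees are nonnegative integers; the minimum is attained). I claim $I = \QP \circ G$. The inclusion $\QP \circ G \subseteq I$ is immediate since $I$ is a left ideal. For the reverse inclusion, let $P \in I$ and run Algorithm [rquorem] to write $P = Q \circ G + R$ with $\deg_q(R) < \deg_q(G)$ (valid by the Lemma on rquorem). Then $R = P - Q\circ G \in I$ because $P \in I$ and $Q \circ G \in \QP \circ G \subseteq I$; minimality of $\deg_q(G)$ forces $R = 0$, so $P = Q \circ G \in \QP \circ G$. (The zero ideal is principal trivially, generated by $0$.)

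For (2): Given $A, B \in \QP$, consider the left ideal $I = \QP \circ A + \QP \circ B$; by (1), $I = \QP \circ D$ for some $D$. Then $D \in I$ writes as $D = U \circ A + V \circ B$, so any common right divisor of $A$ and $B$ right-divides $D$; conversely $A, B \in I = \QP \circ D$ means $D$ right-divides both $A$ and $B$. Hence $D$ is a greatest common right divisor $A \wedge_r B$ (greatest in the right-divisibility order, and unique up to a left unit, i.e. a nonzero scalar in $\mathbb{F}_{q^m}$, which we may normalize to be monic). For the left least common multiple: consider the left ideal $J = \QP \circ A \,\cap\, \QP \circ B$, which is a left ideal as an intersection of two left ideals; by (1) it is $\QP \circ M$ for some $M$. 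Then $M \in \QP \circ A$ and $M \in \QP \circ B$ means $A$ and $B$ both right-divide $M$, and $J = \QP \circ M$ means any common left multiple of $A$ and $B$ (i.e. any element right-divisible by both) lies in $J$ hence is a left multiple of $M$; so $M = A \vee_l B$, again unique up to a left unit.

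The only real point of care — the "main obstacle", such as it is — is keeping the sidedness consistent: because rquorem produces $A = Q \circ B + R$ with the quotient on the left, the natural ideals to use are left ideals of the form $\QP \circ G$, so "$G$ right-divides $P$" corresponds to "$P \in \QP \circ G$", and correspondingly one gets greatest common \emph{right} divisors and least common \emph{left} multiples. Everything else is the verbatim Euclidean-division argument; the scalars $\mathbb{F}_{q^m}^\times$ are exactly the units, which pins down the generators, gcrd and llcm up to normalization.
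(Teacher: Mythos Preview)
Your proof is correct and is exactly the standard argument. The paper itself does not give a proof of this proposition: it merely states it ``as a consequence of the existence of the right euclidian division'' and then moves on to present the RGCD and LLCM algorithms as constructive tools. Your write-up supplies precisely the classical details behind that assertion (minimal-$q$-degree generator for a left ideal; $\QP\circ A + \QP\circ B$ for the gcrd; $\QP\circ A \cap \QP\circ B$ for the llcm), with the sidedness handled correctly throughout.
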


\noindent We will see that both right greatest common divisor (denoted RGCD) and left
least common multiple (denoted LLCM) can be computed by a generalization of the
extended Euclide algorithm in the non-commutative case. For the sake of
simplicity, we will assume that $A$ and $B \in \mathbb{F}_{q^m} \langle X^q
      \rangle$ and that $\deg_q(A) \geqslant \deg_q(B)$. The
following algorithm compute the coefficients of the B{\'e}zout relation for
the $\tmop{RGCD}$.

\begin{algorithm}
      \caption{Extended right Euclidean algorithm for $q$-degree polynomials (RGCD)}
      \begin{algorithmic}[1]
            \REQUIRE $A, B \in \mathbb{F}_{q^m} \langle X^q \rangle$
            \ENSURE $U_0, V_0, R_0$ such that $U_0 \circ A + V_0 \circ B = R_0 = A \wedge B$
            \STATE $U_0 \gets 1,\; V_0 \gets 0,\; U_1 \gets 0,\; V_1 \gets 1,\; R_0 \gets A,\; R \gets B,\; Q \gets 0,\; U_t \gets 0,\; V_t \gets 0$
            \WHILE{$R \neq 0$}
            \STATE $(Q, R) \gets \text{rquorem}(R_0, R)$
            \STATE $U_t \gets U_1,\; V_t \gets V_1$
            \STATE $U_1 \gets U_0 - Q \circ U_1$
            \STATE $V_1 \gets V_0 - Q \circ V_1$
            \STATE $U_0 \gets U_t,\; V_0 \gets V_t$
            \STATE $R_0 \gets R$
            \ENDWHILE
            \RETURN $(U_0, V_0, R_0)$
      \end{algorithmic}
\end{algorithm}

\noindent In this algorithm, the third entry $R_0$ is $A \wedge_r B$. A slight change
on the output of the algorithm allows to get the coefficient of the B{\'e}zout
relation for the LLCM:

\begin{algorithm}[H]
      \caption{Algorithm for computing left co-multipliers (LLCM)}
      \begin{algorithmic}[1]
            \REQUIRE $A, B \in \mathbb{F}_{q^m} \langle X^q \rangle$
            \ENSURE $U_1, V_1$ such that $U_1 \circ A + V_1 \circ B = 0$ and
            $\deg_q(U_1) = \deg_q(B) - \deg_q(A \wedge_r B)$ and
            $\deg_q(V_1) = \deg_q(A) - \deg_q(A \wedge_r B)$.
            \STATE $U_0 \gets 1,\; V_0 \gets 0,\; U_1 \gets 0,\; V_1 \gets 1,\; R_0 \gets A,\; R \gets B,\; Q \gets 0,\; U_t \gets 0,\; V_t \gets 0$
            \WHILE{$R \neq 0$}
            \STATE $(Q, R) \gets \text{rquorem}(R_0, R)$
            \STATE $U_t \gets U_1,\; V_t \gets V_1$
            \STATE $U_1 \gets U_0 - Q \circ U_1$
            \STATE $V_1 \gets V_0 - Q \circ V_1$
            \STATE $U_0 \gets U_t,\; V_0 \gets V_t$
            \STATE $R_0 \gets R$
            \ENDWHILE
            \RETURN $(U_1, V_1)$
      \end{algorithmic}
\end{algorithm}

Then $A \vee_l B = U_1 \circ A = -V_1 \circ B$.\\

\subsubsection{Complexity of basic arithmetic over linearized polynomials} \label{complex1}

Here, we  summarize few results on the complexity of the different arithmetic operations on linearized
polynomials presented in this section. These results will be needed in order to
evaluate the complexity of the algorithms for Chinese remainder lifting. We
refer to the work of Caruso and Le Borgne \cite{CARUSOLEBORGNE}. Here $\SM{d}{m}$ denotes the number of operations in
$\Fq$ required to multiply two polynomials in $\QP$ of $q$-degree $d$. We assume that all operations
in $\Fqm$ can be performed in quasi-linear time, that is, all arithmetic operations in $\Fqm$ have complexity
$\tilde{\mathcal{O}}(m)$ operations in $\Fq$, where $\tilde{\mathcal{O}}$ ignores
polylogarithmic factors. We focus on the complexity of the basic arithmetic in $\QP$.
Clearly, addition has linear complexity. Multiplication plays a central role in the arithmetic.
The multiplication of a linearized polynomial of $q$-degree $d_1$ by a linearized polynomial of $q$-degree $d_2$
can be performed ``naively" using $\sO(d_2\cdot m^2 +d_1\cdot d_2 \cdot m)$ operations in $\Fq$. In particular, if $d_1$ and
$d_2$ are both bounded by $d$, the product can be computed using
$\sO(d^2 \cdot m + m^2 \cdot d)$ operations in $\Fq$.  From \cite{CARUSOLEBORGNE}, using
Karatsuba-like approach, the multiplication of two linearized polynomials of $q$-degree at most $d$
can be done using $\sO(d^{1.58} \cdot m^{1.41})$ operations in $\Fq$. In other words,
$\SM{d}{m} \in \sO(d^{1.58} \cdot m^{1.41})$ . The division of a linearized
polynomial of $q$-degree at most $d$ by a linearized polynomial of $q$-degree at most $d$
can be performed in $\sO(\SM{d}{m})$ operations in $\Fq$. Using algorithm 6 of \cite{CARUSOLEBORGNE},
the right greatest common divisor of two linearized polynomials of $q$-degree bounded by $d$ can be done using
$\sO(\SM{d}{m})$  operations in $\Fq$ (an amount of $\log(d)$ euclidian divisions using a divide and conquer approach).
This last complexity is important in order to evaluate the complexity of the lifting of the Chinese remainder theorem.

\begin{remark} \label{remult}
      $\SM{l \cdot d}{m} \leq l^2 \cdot \SM{d}{m}$.
\end{remark}

\color{black}

\subsection{Link with endomorphisms of $\mathbb{F}_{q^m}$}\label{endo}

We have already seen that one can interpret every linearized polynomial as an
$\mathbb{F}_q$-endomorphism of $\mathbb{F}_{q^m}$ and so Lemma \ref{linmap}
gives an embeding of $\mathbb{F}_{q^m} \langle X^q \rangle$ into the
$\tmop{End}_{\mathbb{F}_q} (\mathbb{F}_{q^m})$ of the
$\mathbb{F}_q$-endomorphism of $\mathbb{F}_{q^m}$. Recalling that
$\mathbb{F}_{q^m}$ is the splitting field of $X^{q^m} - X$ one can deduce
that $P$ defines the same linear map as its remainder by the right division by
$X^{q^m} - X$. It is summarized in the following lemma.

\begin{lemma} \label{lemendo}
      The sets $\frac{\QP}{\left( X^{q^m}-X\right)_l}$ and $\tmop{End}_{\mathbb{F}_q} (\mathbb{F}_{q^m})$ are isomorphic as $\Fq$-algebras.
\end{lemma}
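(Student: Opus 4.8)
The plan is to exhibit an explicit $\GF{q}$-algebra morphism
$\varphi : \QP \to \tmop{End}_{\GF{q}}(\GF{q^m})$, show it is surjective, and identify its kernel as the left ideal generated by $X^{q^m}-X$; the statement then follows from the first isomorphism theorem for $\GF{q}$-algebras. First I would take $\varphi$ to be the map that sends a linearized polynomial $P$ to the $\GF{q}$-linear map $\zeta \mapsto P(\zeta)$ provided by Lemma \ref{linmap}. That $\varphi$ is $\GF{q}$-linear is immediate; that it respects products requires checking $\varphi(P\circ Q) = \varphi(P)\circ\varphi(Q)$, i.e. that composition of linearized polynomials corresponds to composition of the associated maps. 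This reduces by linearity to monomials, where $(a X^{q^i})\circ(b X^{q^j}) = a b^{q^i} X^{q^{i+j}}$ evaluated at $\zeta$ gives $a b^{q^i}\zeta^{q^{i+j}} = a\,(b\zeta^{q^j})^{q^i}$, which is exactly $\varphi(aX^{q^i})$ applied to $\varphi(bX^{q^j})(\zeta)$. Since $\varphi(X) = \mathrm{id}$, $\varphi$ is a unital $\GF{q}$-algebra homomorphism.

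Next I would compute the kernel. Since $\zeta^{q^m} = \zeta$ for every $\zeta \in \GF{q^m}$ (as $\GF{q^m}$ is the splitting field of $X^{q^m}-X$), the polynomial $X^{q^m}-X$ lies in $\ker\varphi$, and because $\ker\varphi$ is a left ideal and $\QP$ is right principal (Proposition, item 1), we have $\left(X^{q^m}-X\right)_l \subseteq \ker\varphi$. For the reverse inclusion, given $P \in \ker\varphi$, perform right division by $X^{q^m}-X$ via Algorithm [rquorem] to write $P = Q\circ(X^{q^m}-X) + R$ with $\deg_q(R) < m$; applying $\varphi$ and using that the first term is killed gives $\varphi(R) = 0$, so the $\GF{q}$-linear map $\zeta\mapsto R(\zeta)$ vanishes on all of $\GF{q^m}$. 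Here one uses that a nonzero linearized polynomial of $q$-degree $d$ has at most $q^d$ roots (its associated map has kernel of $\GF{q}$-dimension at most $d$): if $R \neq 0$ then $\deg_q(R) = d < m$ would force the kernel of $\varphi(R)$ to have dimension $\le d < m$, contradicting that it is all of $\GF{q^m}$. Hence $R = 0$ and $P \in \left(X^{q^m}-X\right)_l$, giving $\ker\varphi = \left(X^{q^m}-X\right)_l$.

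Finally, surjectivity: the quotient $\QP/\left(X^{q^m}-X\right)_l$ is, as a $\GF{q^m}$-vector space, spanned by the classes of $1, X^{q}, X^{q^2}, \dots, X^{q^{m-1}}$, hence has $\GF{q}$-dimension at most $m\cdot m = m^2$; on the other hand $\tmop{End}_{\GF{q}}(\GF{q^m}) \cong \mathcal{M}_{m\times m}(\GF{q})$ has $\GF{q}$-dimension exactly $m^2$. The injective $\GF{q}$-algebra map induced by $\varphi$ on the quotient therefore has equal (finite) source and target dimension, so it is an isomorphism; equivalently one invokes the classical fact that every $\GF{q}$-linear endomorphism of $\GF{q^m}$ is represented by a (unique reduced) linearized polynomial of $q$-degree $< m$. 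Combining the three parts, $\varphi$ descends to a $\GF{q}$-algebra isomorphism $\QP/\left(X^{q^m}-X\right)_l \xrightarrow{\ \sim\ } \tmop{End}_{\GF{q}}(\GF{q^m})$.

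The main obstacle is the dimension/root-count step that forces $R=0$: one must be careful that the bound ``a linearized polynomial of $q$-degree $d$ has associated-map kernel of dimension $\le d$'' is genuinely available — it follows from the Corollary (each root $\zeta$ contributes a right factor $X^q - \zeta^{q-1}X$, and $q$-degrees of right factors add) together with a short induction, but it is the one place where the argument uses more than formal ring theory. Everything else is bookkeeping: checking $\varphi$ is multiplicative on monomials, and matching $\GF{q}$-dimensions $m^2 = m^2$.
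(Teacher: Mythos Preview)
Your argument is correct and complete. The paper itself does not supply a proof of this lemma: the paragraph preceding it merely observes that Lemma~\ref{linmap} gives the evaluation map into $\tmop{End}_{\GF{q}}(\GF{q^m})$ and that, since $\GF{q^m}$ is the splitting field of $X^{q^m}-X$, a linearized polynomial and its remainder by $X^{q^m}-X$ define the same map. Your write-up is exactly the standard way to turn that sketch into a proof --- evaluation homomorphism, kernel via right division plus a root count, surjectivity by a dimension match --- so the approaches coincide; you have simply filled in the details the paper omits.

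One cosmetic slip: in the surjectivity paragraph your listed $\GF{q^m}$-basis of the quotient should be $X, X^{q}, \dots, X^{q^{m-1}}$ rather than starting with~$1$ (constants are not linearized polynomials; the $i=0$ monomial is $X^{q^0}=X$). This does not affect the argument. Also, for the ``kernel has dimension $\le d$'' step you do not need the Corollary and an induction: it is enough to note that a linearized polynomial of $q$-degree $d$ has ordinary degree $q^{d}$ and hence at most $q^{d}$ roots in $\GF{q^m}$, which is fewer than $q^{m}$ when $d<m$.
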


\subsection{Effective Chinese Remainder Theorem for linearized
      polynomials}\label{secECRT}

In this subsection, we focus on an algorithm to reconstruct a linearized
polynomial from its remainders with respect to several moduli linearized
polynomials. In \cite{R1reviewer1}, authors give a generalization of
Lagrange interpolation to linearized polynomials. Lagrange interpolation for classical
polynomials is a particular case of the Chinese Remainder Theorem for classical polynomials (see Theorem 1 in \cite{yu_polynomial_2012}), when all the moduli have degree $1$.
The algorithm given in this subsection provides a full generalization of Theorem 1 from  \cite{yu_polynomial_2012} to the case of linearized polynomials.  \color{black}

We first give the result for two linearized moduli polynomials
and then we show how to generalize that for an arbitrary number of moduli
polynomials.

Let $f_1$ and $f_2 \in \mathbb{F}_{q^m} \langle X^q \rangle$ of $q$-degree
repsectively $d_1$ and $d_2$. For any linearized polynomial $g$ we
denote $\pi_i (g)$ its remainder with respect to the right division by $f_i$. We
assume that $f_1$ and $f_2$ are coprime, that is to say that we have a
B{\'e}zout relation $S_1 \circ f_1 + S_2 \circ f_2 = X$ with $\deg_q
      (S_1) < d_2$ and $\deg_q (S_2) < d_1$ using the algorithm
$\tmop{RGCD}$. Remark that $S_1$ is the inverse of $f_1$ modulo $f_2$, i.e.
$\pi_2 \left( S_1 {\circ f_1}  \right) = X$ and that $S_2$ is the inverse of
$f_2$ modulo $f_1$, i.e. $\pi_1 (S_2 \circ f_2) = X$.

\begin{lemma}
      \label{lemtech}Let $g \in \mathbb{F}_{q^m} \langle X^q \rangle$ then $g -
            \pi_2 (g) \circ S_1 \circ f_1 - \pi_1 (g) \circ S_2 \circ f_2$ lies in the
      left ideal $(f_1 \vee_l f_2)_l$.
\end{lemma}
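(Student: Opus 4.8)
The plan is to prove the \emph{stronger} statement that
$h := g - \pi_2(g)\circ S_1\circ f_1 - \pi_1(g)\circ S_2\circ f_2$ is right-divisible by \emph{both} $f_1$ and $f_2$, and then to identify the set of common left multiples of $f_1$ and $f_2$ with the left ideal $(f_1\vee_l f_2)_l$.

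First I would record the elementary properties of the remainder maps $\pi_i$ (remainder of the right division by $f_i$) that make the computation go through. Writing $g = Q\circ f_i + \pi_i(g)$ with $\deg_q(\pi_i(g)) < d_i$ and invoking uniqueness of the remainder in the right Euclidean division, one obtains: (a) $\pi_i$ is additive; (b) $\pi_i(P\circ m) = 0$ whenever $f_i$ divides $m$ on the right, since $(f_i)_l = \mathbb{F}_{q^m}\langle X^q\rangle\circ f_i$ is a left ideal and hence absorbs composition on the left, and more generally $\pi_i(P\circ Q) = \pi_i\bigl(P\circ\pi_i(Q)\bigr)$ because $P\circ Q - P\circ\pi_i(Q)\in (f_i)_l$; (c) $\pi_i\circ\pi_i = \pi_i$, because $\deg_q(\pi_i(g)) < d_i$; (d) $X$ is the identity for $\circ$, so $P\circ X = P$; (e) by the defining property of the B\'ezout coefficients one has $\pi_1(S_2\circ f_2) = X$ and $\pi_2(S_1\circ f_1) = X$, i.e. there are $T_1,T_2$ with $S_2\circ f_2 = T_1\circ f_1 + X$ and $S_1\circ f_1 = T_2\circ f_2 + X$.

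Next I would compute $\pi_1(h)$. By (a), $\pi_1(h) = \pi_1(g) - \pi_1(\pi_2(g)\circ S_1\circ f_1) - \pi_1(\pi_1(g)\circ S_2\circ f_2)$. The middle term vanishes by (b), being a left multiple of $f_1$. For the last term I substitute $S_2\circ f_2 = T_1\circ f_1 + X$ from (e), so $\pi_1(g)\circ S_2\circ f_2 = \pi_1(g)\circ T_1\circ f_1 + \pi_1(g)$ by (d); the first summand is again a left multiple of $f_1$ and is killed by $\pi_1$, while $\pi_1(\pi_1(g)) = \pi_1(g)$ by (c). Hence $\pi_1(h) = \pi_1(g) - 0 - \pi_1(g) = 0$, i.e. $f_1$ divides $h$ on the right. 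The computation of $\pi_2(h) = 0$ is symmetric, exchanging the indices $1$ and $2$ and using $\pi_2(S_1\circ f_1) = X$.

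Finally I would conclude: having shown $h\in (f_1)_l\cap (f_2)_l$, it remains to recall that this intersection equals $(f_1\vee_l f_2)_l$. Indeed $(f_1)_l\cap (f_2)_l$ is a left ideal, hence principal by the Proposition on right-Euclideanness, and a generator of it is a common left multiple of $f_1$ and $f_2$ of minimal $q$-degree, which is the defining property of $f_1\vee_l f_2$; equivalently, the identities $f_1\vee_l f_2 = U_1\circ f_1 = -V_1\circ f_2$ give $(f_1\vee_l f_2)_l\subseteq (f_1)_l\cap (f_2)_l$, and minimality of the $q$-degree of $f_1\vee_l f_2$ yields the reverse inclusion. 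The only genuine subtlety is of a one-sided-ideal nature: since $\pi_i$ is not a ring homomorphism, one must be careful to use only the left-absorption property $P\circ(f_i)_l\subseteq (f_i)_l$ and the identity $\pi_i(P\circ Q) = \pi_i\bigl(P\circ\pi_i(Q)\bigr)$ rather than any multiplicativity; once this is in place the argument is a short computation.
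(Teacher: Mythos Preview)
Your proof is correct and follows essentially the same route as the paper: show that $h$ is right-divisible by both $f_1$ and $f_2$ by reducing modulo each and using the B\'ezout relation, then conclude $h\in(f_1\vee_l f_2)_l$. You are simply more explicit than the paper about the properties of $\pi_i$ being used and about the identification $(f_1)_l\cap(f_2)_l=(f_1\vee_l f_2)_l$, which the paper leaves implicit.
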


\begin{proof}
      Since $S_2$ is the inverse of $f_2$
      modulo $f_1$, we have \[ g - \pi_2 (g) \circ S_1 \circ f_1 - \pi_1
            (g) \circ S_2 \circ f_2  = \pi_1 (g) -
            \pi_1 (g) \circ S_2 \circ f_2 = 0 \mod f_1. \] Therefore, $f_1$ divides
      $g - \pi_2 (g) \circ S_1 \circ f_1 - \pi_1 (g) \circ S_2 \circ f_2$ on the
      right.  In the same way, we show that $f_2$ divides
      $g - \pi_2 (g) \circ S_1 \circ f_1 - \pi_1 (g) \circ S_2 \circ f_2$ on the
      right.  We
      conclude that since $g - \pi_2 (g) \circ S_1 \circ f_1 - \pi_1 (g) \circ S_2
            \circ f_2$ can be divided on the right by both $f_1$ and $f_2$, it a left
      multiple of $f_1 \vee_l f_2$.
\end{proof}

\noindent For all $g \in \mathbb{F}_{q^m} \langle X^q \rangle$, we denote by $\pi_{1,2} (g)$
the remainder by the right division of $g$ by $f_1 \vee_l f_2$. We are now able to
state the Chinese Remainder Theorem for linearized polynomials with two
moduli.

\begin{theorem}
      \label{CRT-2}Let $g \in \mathbb{F}_{q^m} \langle X^q \rangle$ of $q$-degree
      strictly less than $d_1 + d_2$ then $\pi_{1,2} (\pi_2 (g) \circ S_1 \circ f_1 +
            \pi_1 (g) \circ S_2 \circ f_2) = g$.
\end{theorem}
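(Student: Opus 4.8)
The plan is to combine Lemma~\ref{lemtech} with a degree count. Write $h \assign \pi_2(g)\circ S_1\circ f_1 + \pi_1(g)\circ S_2\circ f_2$. By Lemma~\ref{lemtech}, $g - h$ lies in the left ideal $(f_1\vee_l f_2)_l$, so $f_1\vee_l f_2$ divides $g-h$ on the right; equivalently $g$ and $h$ have the same remainder under right division by $f_1\vee_l f_2$, i.e. $\pi_3(g)=\pi_3(h)$. So the statement reduces to showing $\pi_3(g)=g$, which amounts to showing $\deg_q(g) < \deg_q(f_1\vee_l f_2)$.

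First I would recall that since $f_1$ and $f_2$ are coprime we have $f_1\wedge_r f_2 = X$, which has $q$-degree $0$. Then, using the degree formula for the LLCM coming from the $\tmop{LLCM}$ algorithm (the output $U_1,V_1$ satisfies $\deg_q(U_1)=\deg_q(f_2)-\deg_q(f_1\wedge_r f_2)$ and $A\vee_l B = U_1\circ f_1$, so that $\deg_q$ is additive under $\circ$), I get
\[
\deg_q(f_1\vee_l f_2) \;=\; \deg_q(U_1) + \deg_q(f_1) \;=\; \big(d_2 - 0\big) + d_1 \;=\; d_1 + d_2 .
\]
Since by hypothesis $\deg_q(g) < d_1 + d_2 = \deg_q(f_1\vee_l f_2)$, the polynomial $g$ is already its own remainder under right division by $f_1\vee_l f_2$, i.e. $\pi_3(g)=g$. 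Combined with $\pi_3(h)=\pi_3(g)$ from the previous paragraph, this gives $\pi_3(h)=g$, which is exactly the claim.

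The only genuine subtlety is the degree bookkeeping: one must be sure that $\deg_q$ is additive under composition $\circ$ (which follows from $\tmop{LT}(P\circ Q)=\tmop{LC}(P)\tmop{LC}(Q)^{q^{\deg_q P}}X^{q^{\deg_q P+\deg_q Q}}$ and the fact that leading coefficients are nonzero) and that the coprimality hypothesis forces $\deg_q(f_1\wedge_r f_2)=0$, so that no degree is lost in $f_1\vee_l f_2$. Everything else is a direct application of Lemma~\ref{lemtech} and the definition of $\pi_3$.
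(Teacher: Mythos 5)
Your proof is correct and follows essentially the same route as the paper's: apply Lemma~\ref{lemtech} to place $g - h$ in the left ideal $(f_1\vee_l f_2)_l$, use linearity of $\pi_3$, and conclude from the degree bound that $\pi_3(g) = g$. The only difference is that you make explicit the computation $\deg_q(f_1\vee_l f_2) = d_1 + d_2$ (via coprimality and additivity of $\deg_q$ under $\circ$), which the paper leaves implicit.
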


\begin{proof}
      By Lemma \ref{lemtech}, we know that $g - (\pi_2 (g) \circ S_1 \circ f_1 +
            \pi_1 (g) \circ S_2 \circ f_2) \in (f_1 \vee_l f_2)_l$ and since $\deg_q (g) < d_1 + d_2$ we have $\pi_{1,2} (g - (\pi_2 (g) \circ S_1 \circ
            f_1 + \pi_1 (g) \circ S_2 \circ f_2)) = \pi_{1,2} (g) - \pi_{1,2} (\pi_2 (g) \circ
            S_1 \circ f_1 + \pi_1 (g) \circ S_2 \circ f_2) = g - \pi_{1,2} (\pi_2 (g) \circ
            S_1 \circ f_1 + \pi_1 (g) \circ S_2 \circ f_2) = 0.$ Then, we deduce that $g
            = \pi_{1,2} (\pi_2 (g) \circ S_1 \circ f_1 + \pi_1 (g) \circ S_2 \circ f_2)$.
\end{proof}

Using the same notation as Theorem \ref{CRT-2} and assuming that $d_1=d_2=d$, we then have:
\begin{lemma} \label{lemcomp1}
      Given $f_1$, $f_2$, $g_1$ and $g_2$ with $deg_q(g_1)<deg_q(f_1)$ and $deg_q(g_2) < deg_q(f_2)$, the computation of $g$ with $deg_q
            (g)< deg_q(f_1  \vee_l f_2)$ such that $\pi_1(g)=g_1$ and $\pi_2(g)=g_2$ can be done using $\sO(\SM{d}{m})$ operations in $\Fq$.
\end{lemma}

\begin{proof}
      From Theorem \ref{CRT-2}, the polynomial $g= \pi_{1,2} (g_2 \circ S_1 \circ f_1 +  g_1 \circ S_2 \circ f_2)$ satisfies
      $\pi_1(g)=g_1$ and $\pi_2(g)=g_2$. The computation of $S_1$ and $S_2$ is performed using the extended Euclide algorithm on $f_1$ and
      $f_2$ in a way that they can be computed using $\sO(\SM{d}{m})$ operations in $\Fq$.
      The computation of $g_1 \circ S_2 \circ f_2$ and $g_2 \circ S_2 \circ f_2$ can be performed using $\sO(4 \cdot \SM{d}{m})$ by Remark \ref{remult}.
      The same argument holds for the last division of $g_1 \circ S_2 \circ f_2$ and $g_2 \circ S_1 \circ f_1$ (both of $q$-degree $3 \cdot d$) by $f_1 \vee_l f_2$ (which has $q$-degree $2 \cdot d$ )
      that can be computed using $\sO(9 \cdot \SM{d}{m})$, since all involved $q$-polynomials have $q$-degree bounded by $3 \cdot d$ . This allows to conclude.
\end{proof}

\color{black}

\noindent It is important to understand that even if $f_1, f_2$ and $f_3$ are pairwise coprime, then $f_1 \vee_l f_2$ is not always coprime with $f_3$. Assume
$\zeta$ and $\xi \in \mathbb{F}_{q^m}$ and are $\mathbb{F}_q$ independent.
Consider $f_1 = X^q - \zeta^{q - 1} \cdot X$, $f_2 = X^q - \xi^{q - 1} \cdot
      X$ and $f_3 = X^q - (\zeta + \xi)^{q - 1} \cdot X$. Those three linearized
polynomials are coprime. Since $\zeta \in \ker (f_1)$ and $\xi \in \ker
      (f_2)$, $\zeta + \xi \in \ker (f_1 \vee_l f_2)$ and therefore $f_3 = X^q -
            (\zeta + \xi)^{q - 1}  \cdot X$ divides $f_1 \vee_l f_2$ on the right. It explains why
the construction of the non-commutative lifting for the Chinese Remainder
Theorem is not straightfoward in the case of three polynomials or more. \\

\noindent Let $f_1, f_2, \cdots, f_k \in \mathbb{F}_{q^m} \langle X^q \rangle$ of
$q$-degree respectively $d_1, d_2, \cdots, d_k$ and such that $f_1 \vee_l f_2
      \vee_l \cdots \vee_l f_{i - 1}$ is coprime with $f_i$ for all $i \in \{ 2,
      \cdots, k \}$. For all $g \in \mathbb{F}_{q^m} \langle X^q \rangle$, we denote
$\pi_i (g)$ \ the remainder of $g$ by the right division by $f_i$. We define
$h_1 = f_1$, \ $h_{i + 1} = h_i \vee_l f_{i + 1}$ for $i \in \{ 1, \cdots, k -
      1 \}$ and $\pi_{1, i} (g)$ the remainder of $g$ by the right division by $h_i$
for all $i \in \{ 2, \cdots, k \}$. We also consider $S_{i, 1}$ and $S_{i+1, 2}$
the coefficients of the B{\'e}zout relation $S_{i, 1} \circ h_i + S_{i+1, 2}
      \circ f_{i+1} = X$ between $h_i$ and $f_{i+1}$ (since they are assumed to be coprime).

\begin{theorem}
      \label{CRT}With the notations hereabove, if $g \in \mathbb{F}_{q^m} \langle
            X^q \rangle$ then defining $g_1 = \pi_1 (g)$ and $g_i = \pi_i(g) \circ S_{1, i-1} \circ h_{i-1} + g_{i-1} \circ S_{2, i} \circ f_i$, we have
      $g - g_k \in (f_1 \vee_l \cdots \vee_l f_k)_l$ and then if $\deg_q
            (g) < \deg_q (f_1 \vee_l \cdots \vee_l f_k) = d$ then $g_k = g$.
\end{theorem}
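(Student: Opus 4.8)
The plan is to prove the statement by induction on the number of moduli $k$, using Theorem \ref{CRT-2} as the base case and mirroring its proof at each step. The key observation is that the recursion defining $g_i$ is exactly the two-modulus CRT reconstruction formula applied to the pair $(h_{i-1}, f_i)$: at stage $i$ we are combining a polynomial $g_{i-1}$ that plays the role of ``the residue modulo $h_{i-1}$'' with the genuine residue $\pi_i(g)$ modulo $f_i$, using the B\'ezout coefficients $S_{1,i-1}, S_{2,i}$ of the coprime pair $(h_{i-1}, f_i)$. So the induction hypothesis I would carry is: $g - g_{i} \in (h_{i})_l = (f_1 \vee_l \cdots \vee_l f_i)_l$, equivalently $\pi_{1,i}(g - g_i) = 0$, equivalently $g \equiv g_i \pmod{h_i}$.

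First I would establish the base case $i=1$: $g - g_1 = g - \pi_1(g) \in (f_1)_l = (h_1)_l$ by definition of the remainder, which is immediate. For the inductive step, assume $g \equiv g_{i-1} \pmod{h_{i-1}}$; I want $g \equiv g_i \pmod{h_i}$ where $g_i = \pi_i(g) \circ S_{1,i-1} \circ h_{i-1} + g_{i-1} \circ S_{2,i} \circ f_i$ and $h_i = h_{i-1} \vee_l f_i$. Following the proof of Lemma \ref{lemtech}, I would show $h_{i-1}$ divides $g - g_i$ on the right and $f_i$ divides $g - g_i$ on the right; since $h_i = h_{i-1} \vee_l f_i$ is their LLCM, it then divides $g - g_i$ on the right, which is the claim. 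For divisibility by $h_{i-1}$: modulo $h_{i-1}$, the term $\pi_i(g)\circ S_{1,i-1}\circ h_{i-1}$ vanishes, so $g_i \equiv g_{i-1}\circ S_{2,i}\circ f_i \pmod{h_{i-1}}$; since $S_{2,i}$ is the inverse of $f_i$ modulo $h_{i-1}$ (because $S_{1,i-1}\circ h_{i-1} + S_{2,i}\circ f_i = X$ gives $S_{2,i}\circ f_i \equiv X \pmod{h_{i-1}}$), we get $g_i \equiv g_{i-1} \equiv g \pmod{h_{i-1}}$ using the induction hypothesis. For divisibility by $f_i$: modulo $f_i$, the term $g_{i-1}\circ S_{2,i}\circ f_i$ vanishes, so $g_i \equiv \pi_i(g)\circ S_{1,i-1}\circ h_{i-1} \pmod{f_i}$; since $S_{1,i-1}\circ h_{i-1} \equiv X \pmod{f_i}$, this gives $g_i \equiv \pi_i(g) \equiv g \pmod{f_i}$. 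This closes the induction, so $g - g_k \in (h_k)_l = (f_1 \vee_l \cdots \vee_l f_k)_l$.

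Finally, for the degree conclusion: if $\deg_q(g) < \deg_q(h_k) = d$, then $g$ is its own remainder modulo $h_k$, i.e. the right division of $g$ by $h_k$ yields quotient $0$ and remainder $g$. Since $g - g_k \in (h_k)_l$, there is a linearized polynomial $T$ with $g - g_k = T \circ h_k$. One technical point worth noting is that one also needs $\deg_q(g_k) < d$ so that $g - g_k$ having $q$-degree less than $d$ forces $T = 0$; this follows because each $g_i$ is, after the reduction implicit in the construction, the remainder modulo $h_i$, hence has $q$-degree $< \deg_q(h_i) \le d$ — alternatively one argues directly that $g \equiv g_k \pmod{h_k}$ together with $\deg_q(g) < d$ pins down $g$ as $\pi_{1,k}(g_k)$, and by the same congruence $\pi_{1,k}(g_k) = g_k$ when $g_k$ is already reduced. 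Either way, $g_k = g$.

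The main obstacle I anticipate is not the algebra of the inductive step — which is a faithful copy of the two-modulus argument — but rather bookkeeping the coprimality and degree hypotheses correctly: the statement assumes $h_{i-1} = f_1 \vee_l \cdots \vee_l f_{i-1}$ is coprime with $f_i$ for every $i$ (the example with $f_1, f_2, f_3$ of $q$-degree one shows this is a genuine assumption, not automatic from pairwise coprimality), and this is exactly what guarantees the B\'ezout relation $S_{1,i-1}\circ h_{i-1} + S_{2,i}\circ f_i = X$ exists at each stage. I would make sure to invoke this hypothesis explicitly where the inverses-modulo arguments are used, and to confirm that each intermediate $g_i$ is kept reduced modulo $h_i$ so the final degree comparison goes through cleanly.
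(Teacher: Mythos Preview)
Your proposal is correct and follows essentially the same inductive argument as the paper: both prove $g-g_i\in(h_i)_l$ by checking right-divisibility by $h_{i-1}$ (using the induction hypothesis and that $S_{2,i}$ inverts $f_i$ modulo $h_{i-1}$) and by $f_i$ (using that $S_{1,i-1}$ inverts $h_{i-1}$ modulo $f_i$), then invoke the LLCM. Your discussion of the degree conclusion is in fact more careful than the paper's, which does not explicitly address whether $g_k$ itself has $q$-degree below $d$; your observation that one should either keep each $g_i$ reduced modulo $h_i$ or pass to $\pi_{1,k}(g_k)$ is a worthwhile refinement.
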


\begin{proof}
      We will prove the result by induction. First, we have $g - g_1 \in (h_1)_l$
      since $h_1 = f_1$ and $g = \pi_1 (g)$. If the $q$-degree of $g$ is
      smaller than the one of $f_1$ then $g_1 = g$.   Assume now that there exists $i$ such that for all $j\leqslant i,$ $g - g_j \in
            (h_j)$.

      \noindent We then have \begin{align*} g-g_{i+1} & =\pi_{i+1}(g) - (\pi_{i+1}(g) \circ S_{1, i} \circ h_{i} + g_{i} \circ S_{2, i+1} \circ f_{i+1}) \\
                               & = \pi_{i+1}(g) - \pi_{i+1}(g) = 0   \mod f_{i+1}\end{align*} since $ S_{1, i}$ is the inverse of $h_{i}$ mod $f_{i+1}.$

      \noindent We also have \begin{align*}
            g-g_{i+1} & = g - (\pi_{i+1}(g) \circ S_{1, i} \circ h_{i} + g_{i} \circ S_{2, i+1} \circ f_{i+1}) \\
                      & =g- g_{i} =0
            \mod h_{i}
      \end{align*}  since $ S_{2, i+1}$ is the inverse of  $f_{i+1}$ mod $h_{i}$, and by induction hypothesis.
      This shows that $g-g_{i+1} $ can by divided on the right by both $h_i$ and $f_{i+1}$. Therefore, $g - g_{i+1} \in
            (h_{i+1})$.
\end{proof}

\noindent This theorem leads directly to an algorithm (allowing early termination since
the $f_i$ are introduced incrementally and it also allows to control the
$q$-degree of the linearized polynomials involving during the algorithm) but
it is equivalent to the following statement. We denote $b_i = \underset{j \neq
            i}{\vee_l} f_j$ and we assume that $f_i \wedge_r b_i = X$ (it is equivalent of
the hypothesis of the previous theorem) and we denote $S_{1, i} \circ b_i +
      S_{2, i} \circ f_i = X$ (i.e. $S_{1, i}$ is the inverse of $b_i$ mod $f_i$).

\begin{theorem} \label{CRT2}
      Denoting $G =  \underset{i = 1}{\overset{k}{\sum}} \pi_i
            (g) \circ S_{1,i} \circ b_i $, we have $g - G \in (f_1 \vee_l \cdots
            \vee_l f_k)_l$ and if $\deg_q (g) < d$ then $\pi_{1,k}(G) = g$.
\end{theorem}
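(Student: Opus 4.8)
The plan is to show that $G \equiv \pi_i(g) \pmod{f_i}$ for each $i$, then reduce the problem of identifying $G$ modulo $f_1 \vee_l \cdots \vee_l f_k$ to the already-established CRT for a single LLCM modulus, using the minimality of the $q$-degree of $g$. Concretely, I would argue as follows.

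First I would fix an index $i \in \{1, \dots, k\}$ and compute $\pi_i(G)$, i.e. reduce $G = \sum_{j=1}^{k} \pi_j(g) \circ S_{1,j} \circ b_j$ on the right modulo $f_i$. For $j \neq i$, the term $\pi_j(g) \circ S_{1,j} \circ b_j$ is right-divisible by $f_i$: indeed $b_j = \bigvee_{l \neq j, l} f_l$ is a left multiple of $f_i$ since $i \neq j$, so $\pi_j(g) \circ S_{1,j} \circ b_j$ is a left multiple of $f_i$ and hence $\equiv 0 \pmod{f_i}$. Here I would use the fact (from the ring structure subsection and the definition of $\vee_l$) that $f_i$ divides $b_j$ on the right whenever $i \neq j$. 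For the surviving term $j = i$, the defining Bézout relation $S_{1,i} \circ b_i + S_{2,i} \circ f_i = X$ tells us that $S_{1,i} \circ b_i \equiv X \pmod{f_i}$, so $\pi_i(g) \circ S_{1,i} \circ b_i \equiv \pi_i(g) \circ X = \pi_i(g) \pmod{f_i}$. Combining, $G \equiv \pi_i(g) \equiv g \pmod{f_i}$ for every $i$; equivalently, $f_i$ divides $g - G$ on the right for all $i$.

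Next I would invoke the hypothesis that the $f_i$ are (iteratively) coprime, so that by the repeated use of the two-modulus lemma (Lemma \ref{lemtech}, applied inductively exactly as in the proof of Theorem \ref{CRT}), a linearized polynomial right-divisible by every $f_i$ is in fact right-divisible by $f_1 \vee_l \cdots \vee_l f_k$. This is the step that genuinely uses the coprimality assumption $f_i \wedge_r b_i = X$ — without it the conclusion "divisible by each $f_i$ implies divisible by the LLCM" fails, exactly as the three-line-polynomial counterexample in the text before Theorem \ref{CRT} shows. So I would cite Theorem \ref{CRT} (or re-run its induction) to conclude $g - G \in (f_1 \vee_l \cdots \vee_l f_k)_l = (h_k)_l$.

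Finally, assuming $\deg_q(g) < d = \deg_q(h_k)$, I would apply $\pi_{1,k}$ (reduction mod $h_k$) to the relation $g - G \in (h_k)_l$: since $\pi_{1,k}$ is additive and kills left multiples of $h_k$, we get $\pi_{1,k}(g) - \pi_{1,k}(G) = \pi_{1,k}(g - G) = 0$, and because $\deg_q(g) < d$ we have $\pi_{1,k}(g) = g$, whence $\pi_{1,k}(G) = g$. The main obstacle is the middle step: correctly justifying that simultaneous right-divisibility by all the $f_i$ upgrades to right-divisibility by their iterated LLCM, which is precisely where the coprimality hypothesis is essential and where one must be careful not to assume the naive (and false) claim that pairwise coprimality alone suffices; the cleanest route is to reuse the inductive argument already carried out in the proof of Theorem \ref{CRT} rather than reprove it from scratch.
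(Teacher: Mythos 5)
Your proof is correct and takes essentially the same route as the paper's: compute $\pi_i(G)$ term by term, using that $f_i$ divides $b_j$ on the right for $j \neq i$ and that $S_{1,i} \circ b_i \equiv X \pmod{f_i}$ by the B\'ezout relation, to get $\pi_i(g - G) = 0$ for all $i$; deduce $g - G \in (f_1 \vee_l \cdots \vee_l f_k)_l$; then apply $\pi_{1,k}$ together with the degree bound.

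You do, however, misplace the role of the coprimality hypothesis in the middle step. Right-divisibility by every $f_i$ always upgrades to right-divisibility by $f_1 \vee_l \cdots \vee_l f_k$, with no coprimality needed: by definition the LLCM generates the left ideal $\bigcap_i (f_i)_l$, and any linearized polynomial lying in every $(f_i)_l$ lies in the intersection, hence is a left multiple of the LLCM. The paper invokes this directly, with no induction and no appeal to Lemma~\ref{lemtech}. The hypothesis $f_i \wedge_r b_i = X$ is used \emph{before} that point: it is what guarantees the B\'ezout coefficients $S_{1,i}$, $S_{2,i}$ exist (and hence that $G$ is even defined), and it is what ensures $\deg_q(f_1 \vee_l \cdots \vee_l f_k) = \sum_i d_i = d$, so that the bound $\deg_q(g) < d$ is the intended one. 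The three-polynomial example preceding Theorem~\ref{CRT} shows that $f_1 \vee_l f_2$ can fail to be coprime with $f_3$ despite pairwise coprimality — which would obstruct building the B\'ezout data — but it is not a counterexample to the universal property of the LLCM.
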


\begin{proof}
      Let $i \in \{1, \dots, k \}.$ We have : \[ \pi_i(g -G)  = \pi_i(g)- \pi_i(G)=\pi_i(g) - \pi_i(g)  = 0,  \] since $f_i$ divides $b_j$ for all $i \neq j,$ and $S_{1,i} $ is the inverse of $b_i \mod f_i.$
      Hence for all $i \in \{1, \dots, k \},$ $f_i$ divides $g-G$, and then $f_1 \vee_l \cdots \vee_l f_k$ divides $g-G$ on the right. This means that $g-G \in(f_1 \vee_l \cdots \vee_l f_k)_l$.
      If $\deg_q(g) <\deg_q(f_1 \vee_l \cdots \vee_l f_k$), then $\pi_{1,k} (g)=g,$ and since $\pi_{1,k} (g-G)=0,$ we deduce that $g = \pi_{1,k}(G).$
\end{proof}

In order to analyze the worst-case complexity of the lifting of the Chinese Remainder Theorem,
we assume that all the $f_i$ have $q$-degree $d$ and that for all $i \in \left\{1,\dots,k-1\right\}$,  and letting
$h_i=f_1 \vee_l \dots \vee_l f_i$ such that $h_i \wedge_r f_{i+1}= X$, we have $\deg_q(f_1 \vee_l \dots \vee_l f_k)=k \cdot d$
and more generally $\deg_q(h_i)=i \cdot d$.  Then, at each step $i$, the lifting consists  of applying the two-moduli lifting and using Lemma \ref{lemcomp1}.

\begin{lemma}  \label{lemcomp2} For $i \in \{1,\dots, k-1\}$, at step $i$, the number of operations in $\Fq$ required to perform the lifting is
      dominated by $i^2 \cdot \SM{d}{m}$. \end{lemma}

\begin{corollary} \label{comptot}
      Under the above assumptions, the complexity of the lifting of the Chinese Remainder Theorem is $\sO(k^3 \cdot \SM{d}{m})$.
\end{corollary}
This estimate is likely very pessimistic, but it shows that this algorithm runs in polynomial time in the size of the input.
\color{black}

\section{Linearized Polynomial Chinese Remainder Theorem codes} \label{QPCRT}

In this section, we present the construction of a new family of codes. We
give a general construction and we show how others already known rank-metric
codes can be obtained by this construction. Then, we give some special cases of constructions of new rank-metric codes with interesting properties.

Let $f_1, \cdots, f_s \in \mathbb{F}_{q^m} \langle X^q \rangle$ of $q$-degree respectively $d_1, \cdots, d_s$. We denote $\pi_i : \left\{\begin{array}{l}
            \mathbb{F}_{q^m} \langle X^q \rangle \longrightarrow \mathbb{F}_{q^m}
            \langle X^q \rangle \\
            g \longmapsto \pi_i (g)
      \end{array}\right.$ where $\pi_i (g)$ is the remainder of the right division
of $g$ by $f_i$, for all $i \in \{ 1, \cdots, s \}$. We define the following
map:
\[ \Pi = \pi_1 \times \cdots \times \pi_l : \left\{\begin{array}{l}
            \mathbb{F}_{q^m} \langle X^q \rangle \longrightarrow \mathbb{F}_{q^m}
            \langle X^q \rangle / (f_1)_l \times \cdots \times \mathbb{F}_{q^m}
            \langle X^q \rangle / (f_s)_l \\
            g \longmapsto (\pi_1 (g), \cdots, \pi_s (g))
      \end{array}\right. . \]

The map $\Pi$ is an $\mathbb{F}_q$-linear map. Let $A \in \mathbb{F}_{q^m}
      \langle X^q \rangle$, we define the following map:

\[ \mathcal{M}_A : \left\{\begin{array}{l}
            \mathbb{F}_{q^m} \langle X^q \rangle \longrightarrow \mathbb{F}_{q^m}
            \langle X^q \rangle \\
            g \longmapsto g \circ A
      \end{array}\right. . \]

The map $\mathcal{M}_A$ is also a $\mathbb{F}_q$-linear map, and therefore the
following map is also $\mathbb{F}_q$-linear:
\begin{equation}
      \label{map} \Psi_A = \Pi \circ \mathcal{M}_A : \left\{\begin{array}{l}
            \mathbb{F}_{q^m} \langle X^q \rangle \longrightarrow \mathbb{F}_{q^m}
            \langle X^q \rangle / (f_1)_l \times \cdots \times \mathbb{F}_{q^m}
            \langle X^q \rangle / (f_s)_l \\
            g \longmapsto (\pi_1 (g \circ A), \cdots, \pi_s (g \circ A))
      \end{array}\right. .
\end{equation}

\noindent Let $k \in \mathbb{N}$, we denote $\mathbb{F}_{q^m} \langle X^q \rangle_k$ the set of linearized polynomial of $q$-degree strictly less than $k$.
We now have all the ingredients to give the definition of the $q$CRT codes.

\begin{definition}
      The $q$CRT associated to $k$, $A$ and $F = (f_1, \cdots, f_s)$ is
      $$ \mathcal{C}_{F, k, A} = \Psi_A (\mathbb{F}_{q^m} \langle X^q \rangle_k).$$
\end{definition}

\subsection{Main properties of $q$CRT codes}

In this section, we will always assume that we satisfy hypotheses of Theorem \ref{CRT2}. We consider $F = (f_1, \cdots, f_s) \in
      \mathbb{F}_{q^m} \langle X^q \rangle^s$ where $f_i$ is coprime with $h_i = f_1
      \vee_l \cdots \vee_l f_{i - 1}$ for all $i \in \{ 2, \cdots, s \}$ (so
that the full Chinese Remainder Theorem can be applied). We assume that $f_i$ is
of $q$-degree $d_i$ for all $i \in \{ 1, \cdots, s \}$ and we denote $n = d_1
      + \cdots + d_s$. Assume that $A$ has $q$-degree $\alpha$, i.e. $A =
      \underset{i = 0}{\overset{\alpha}{\sum}} a_i \cdot X^{q^i}$.

\begin{proposition}
      Let $F$ and $n$ defined as above.
      Consider $k < n - \alpha$. The $q$CRT code associated to $F$ and $k$ $\mathcal{C}_{F, k,A}$
      is a $\mathbb{F}_{q^m}$-linear rank-metric code of dimension $k$ and length $n$.
\end{proposition}
\color{black}

\begin{remark}
      A codeword in the $q$CRT code $\mathcal{C}_{F, k,A}$ is a set of $s$ remainders, represented by $s$ linearized polynomials. Each remainder of index $i$ has $q$-degree bounded by $d_i$. Therefore, the length $n$ defined here corresponds to the weighted $q$-degree of $F$. This is identical to the length $N$ defined in \cite{yu_polynomial_2012}, which was used to decode Polynomial Remainder codes of \cite{yu_polynomial_2012} in the weighted distance. \\
      Remark that as in \cite{yu_polynomial_2012}, we allow code symbols to have different degrees. \\
      The dimension defined here is related to the dimension $K$ defined in \cite{yu_polynomial_2012}, but is more general, since the dimension of $\mathcal{C}_{F, k,A}$ is not necessarily equal to the weighted $q$-degree of a subset of polynomials contained in $F$.
\end{remark}
\color{black}

We will consider $q$CRT codes in the rank metric, by seeing a codeword as an element of $\Fqm^n$.
\color{black}

The polynomial $A$ has an important role in controlling the minimum distance of the code. For instance if $A=X$ then the multiplication by $A$ is the identity and we always have a rank-weight $1$ word in the code (by encoding $P(X)=X$).

We conjecture that with reasonable hypothesis on the $q$-degree of the moduli linearized polynomials, the minimum distance is the weight of $A(X)$, i.e. the dimension of its support. Numerous tests have been done to reinforce this heuristic and it can be shown in some simple cases.

Conversely, if $A$ has a support of dimension $r$, for every word $\mathbf{z}$ in the code $\mathcal{C}_{F,k,A}$ there exists $P \in \QP_k$ such that $\mathbf{z} = \Psi_A(P)$ in a way that $\mathbf{z}$ is composed of $s$ blocks of coefficients of the linearized polynomials $ P \circ A$ divided on the right by $f_i$ for $i \in \{1,\cdots,s\}$. If the sum of the $q$-degree of the $f_i$ is greater than $r$, one can interpret the division on the right by the $f_i$ as a $\Fq$ linear transform on the coefficients of $P \circ A$. This leads us to the following assertion:  if the $f_i$ have $q$-degree large enough, since by definition  $d_{min} = \min\{ \dim_{\Fq}( \sum_{i=1}^s \text{supp}(P \circ A \, \text{ mod}_r \, f_i)) | P \in \QP_k \}$, we conjecture that:
\begin{align*}
      d_{min} & =\min\{\dim_{\Fq}(\sum_{i=1}^s \text{supp}(P \circ A \, \text{ mod}_r \, f_i)) | P \in \QP_k\} \\& =  \min\{ \dim_{\Fq}(\text{supp}(P \circ A)) | P \in \QP_k \} \\&=\dim_{\Fq}(\text{supp}(A)).
\end{align*}
\color{black}

The required properties on $A$ are not restrictive and it is easy to find a suitable choice for $A$. It is possible to build such a polynomial at random. Building a polynomial $A$ of degree $\alpha$ and rank weight $r$  with coefficients in $\mathbb{F}_{q^m}$ can be done by choosing at random a vector in $\mathbb{F}_{q^m}^{\alpha+1}$ of rank weight $r$, with a non zero coefficient in the last position (in order to indeed build a polynomial of degree $\alpha$), and then building a polynomial whose coefficients are the coefficients of this vector.

\begin{remark}
      This remark is devoted to the link with sum-rank metric and related works using sum-rank metric or sum-rank weight. Let $\mathcal{C}_{F,k,A}$ be a linearized Chinese remainder code associated to $F=(f_1,\cdots,f_s)$. There is a natural way to consider this code in the sum-rank metric. Using definitions and notations above, if ${\bf z} \in  \mathcal{C}_{F,k,A}$ then there exists $P\in \QP_k$ such that ${\bf {z}}=\Psi_A(P) = (\pi_1(P \circ A),\cdots,\pi_s(P \circ A))$ and we define the sum-rank weight of ${\bf z}$ as $$\omega_{sr}({\bf{z}})=\displaystyle \sum_{i=1}^{s} \omega_r(\pi_i(P \circ A)),$$ where $\omega_r(\pi_i(P \circ A))$ is the rank weight of $\pi_i(P \circ A)$, that is, the rank of the matrix built from the vector of the coefficients of $\pi_i(P \circ A)$ via the map $\mathcal{M}$ of Section \ref{introrank}.
      This point of view is close to the one in \cite{martinez-penas_2018} over some special skew polynomials defining skew and linearized Reed-Solomon codes, but the blocks of those codes are linked to some conjugacy classes. If the construction given here can be viewed as a generalization of the work of Mart\'{i}nez-Pe\~{n}as, this is not very straightfoward and a specific work is needed to give a complete undertanding on this perspective.
\end{remark}
\color{black}

\subsection{Some special constructions and examples}
\subsubsection{Link with Gabidulin codes}

A well known family of codes linked to the Gabidulin codes can be obtained as special
case of $q$CRT codes: \\
Let $\zeta_1, \cdots, \zeta_n \in \mathbb{F}_{q^m}$ be
$\mathbb{F}_q$-independent. The polynomials $f_i := X^q -
      \zeta_i^{q - 1} \cdot X$ for $i \in \{ 1, \cdots, n \}$ satisfy the conditions
of the previous subsection. According to Lemma \ref{lemeval}, the
remainder of the division of $g \in \mathbb{F}_{q^m} \langle X^q
      \rangle_k$ by $f_i$ is $g (\zeta_i) \cdot \zeta_i^{- 1} \cdot X$. Using this
fact, we see that the generator matrix of $\mathcal{C}_{F, k,X}$ is the one of the
Gabidulin code associated to $\mathbf{\zeta} = (\zeta_1, \cdots, \zeta_n)$ of dimension $k$ and
length $n$, multiplied by the diagonal matrix $\tmop{diag} (\zeta_1^{_{- 1}},
      \cdots, \zeta_n^{- 1})$. \\

\noindent These codes were already studied. Indeed, these codes can be seen as skew Reed-Solomon codes.

\subsubsection{An instance with efficient encoding and efficient lifting}

Let $l <m$ and $F=(f_1, f_2)\in (\QP)^2$ with $f_1 = X^{q^l}$ and $f_2=X^{q^m}-X$. Since $ X^{q^l} \circ X^{q^{m-l}}-(X^{q^m}-X)\circ X = X$, this gives a Bézout relation proving that $f_1$ and $f_2$ are coprime. Let $A\in \QP,$ and $k<m+l+\deg_q(A).$ Consider $\mathcal{C}_{F,k,A}$. \\
Denote $\mathcal{P}_l : \left\{\begin{array}{l}
            \QP \rightarrow \QP_{l} \\
            \underset{i = 0}{\overset{m}{\sum}} a_i \cdot X^{q^i} \mapsto \underset{i =
                                                                                0}{\overset{l-1}{\sum}} a_i \cdot X^{q^i}
      \end{array}\right.$ the projection on $\QP_{l}$. \\
First remark that the remainder of any $P \in \QP$ with respect to the right division by $X^{q^l}$ is $\pi_1(P)=\mathcal{P}_l(P)$.
Then, each codeword of $\mathcal{C}_{F,k,A}$ is of the form $(\mathcal{P}_l(P\circ A), \pi_2(P\circ A)),$ with $P \in \QP_{k}$, where $\pi_2(P\circ A)$ defines the same $\Fq$ endomorphism of $\Fqm$ as $P\circ A$.
Therefore, in this instance, messages are easily encoded. The coefficients in the Bézout relation are already given above. As a result, the lifting of the Chinese Remainder Theorem is also easily computable. \\
It allows us to compute a generating matrix of this code:\\
Write $\displaystyle A(X) =\sum_{i=0}^{\alpha} a_i\cdot X^{q^i}$. A generating matrix of $\mathcal{C}_{F,k,A}$ is composed of two blocks.
Suppose $k>l.$ The first block is composed of $k$ rows of size $l$.
For every $1 \leqslant i\leqslant l, $ the $i$-th row of the first block is $(0,\dots,0,a_0^{q^{i-1}}, a_{l-1}^{q^{i-1}}, \dots, a_{l-i}^{q^{i-1}}), $ with $i-1$ zeros in the beginning. For every $l+1 \leqslant i\leqslant k, $ the $i$-th row of the first block is $\mathbf{ 0}_{l}$. Suppose also $\alpha<m$.
The second block is composed of $k$ rows of size $m$. Each row $i$ contains the coefficients of the polynomial $A^{q^i}$ divided by $X^{q^m}-X.$ The division of a $q$-polynomial $P$ by $X^{q^m}-X$ can be computed by replacing $X^{q^m}$ by $X$ in the expression of $P$. Therefore, for every $1 \leqslant i\leqslant m-\alpha, $ the $i$-th row of the second block is $(0, \dots, 0, a_0^{q^{i-1}}, \dots, a_\alpha^{q^{i-1}}, 0 , \dots, 0)$,  with $i-1$ zeros in the beginning, and $m-\alpha-i$ zeros in the end.  For every $ m-\alpha+1 \leqslant i\leqslant k, $ the $i$-th row of the second block is $(a_{m-i}^{q^{i-1}}, \dots, a_{\alpha}^{q^{i-1}},0 \dots, 0, a_0^{q^{i-1}}, \dots, a_{m-i-1}^{q^{i-1}} )$, with $m-\alpha-1$ zeros.

\subsubsection{Link with simple codes}
In this subsection, we will show that when the moduli polynomials are with coefficients in $\Fq$, the associated $q$CRT codes are simple codes.
\begin{definition}(\cite{simple})
      Let $C$ be an $[n,k]_{q^m}$ code. $C$ is said to be $(n,k,t)$-simple when it has a parity-check matrix $H$ of the form $$ H = \begin{bmatrix}
                  I_{n-k} & \begin{matrix}
                                  0_{t\times k} \\[3pt]
                                  A
                            \end{matrix}
            \end{bmatrix},$$ where $A\in \mathcal{M}_{ (n-k-t)\times k}(\Fqm)$.
\end{definition}

\begin{proposition}
      Let $F = (f_1, \cdots, f_s) \in (\mathbb{F}_q \langle X^q \rangle)^s$ with $f_i$
      of $q$-degree $d_i$ for all $i \in \{ 1, \cdots, s \}.$  Denote $n = \underset{i = 1}{\overset{s}{\sum}} d_i$. Let $k < n$ be an
      integer. Let $A \in \mathbb{F}_{q^m}\langle X^q \rangle$ be such that $\deg_q(A)+k<n.$ Denote $\alpha = \deg_q(A)$. \\
      The $q$CRT code $\mathcal{C}_{F, k, A}$ is a simple code.
\end{proposition}
\begin{proof}
      Consider the map
      \begin{equation*}
            L_n : \left\{ \begin{array}{rcl} \QP_{<n}             & \to     & \Fqm^n                \\
                         \sum_{i=0}^{n-1}p_iX^{q^i} & \mapsto & (p_0, \dots, p_{n-1})
            \end{array} , \right. \end{equation*}
      associating a $q$-polynomial to its vector coefficient.
      Denote $\mathcal{A} =\{P \circ A,~ P \in \QP_{<k} \}$. Consider $\mathcal{A}_0 = L_n(\mathcal{A})$.
      A generator matrix of $\mathcal{A}_0$ is given by $$ G = \Big(M~|~ \  {0} _{k\times(n-(k+\alpha))}\Big),$$ where $M\in \mathbb{F}_{q^m}^{k\times (k+\alpha)}$.

      We have $(L_n\circ  \Phi)(\mathcal{C})=\mathcal{A}_0$, where $\Phi$ denotes the lifting of the chinese remainder theorem,  and $L_n\circ  \Phi$ is an isometry for the rank metric. \\
\end{proof}
\color{black}

\subsection{Parity Check Matrices for $q$CRT Codes}
\label{sec:parity}

This section presents the construction of parity check matrices for $q$CRT codes and their duals. We first treat the simple case of two coprime $q$-polynomials before generalizing to arbitrary configurations.

\subsubsection{Two-Blocks Case}
\label{example}

Let $F = (f_1, f_2)$ be coprime $q$-polynomials with $\deg_q(f_i) = d_i$, and $A \in \mathbb{F}_{q^m}\langle X^q \rangle$ with $\deg_q(A) = \alpha$. For $k < d_1 + d_2 - \alpha$, consider the code $\mathcal{C}_{F,A,k} = \Psi_A(\QP_k)$.

\begin{lemma}
      A word $(p_1,p_2) \in \QP_{d_1} \times \QP_{d_2}$ belongs to $\mathcal{C}_{F,A,k}$ iff $\mathcal{L}(p_1,p_2) \in V_A$, where:
      \[
            \mathcal{L} :
            \begin{cases}
                  \QP_{d_1} \times \QP_{d_2} \to \QP_{d_1+d_2} \\
                  (p_1,p_2) \mapsto \pi_{1,2}(p_2 \circ S_1 \circ f_1 + p_1 \circ S_2 \circ f_2)
            \end{cases}
      \]
      and $V_A = \mathcal{M}_A(\QP_k)$ for $\mathcal{M}_A(F) = F \circ A$.
\end{lemma}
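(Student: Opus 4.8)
The plan is to characterize membership in $\mathcal{C}_{F,A,k}$ by unwinding the definition $\mathcal{C}_{F,A,k} = \Psi_A(\QP_k) = \Pi(\mathcal{M}_A(\QP_k))$ and then applying the two-modulus CRT (Theorem \ref{CRT-2}) together with Lemma \ref{lemtech}. A word $(p_1,p_2)$ lies in the code iff there is $P \in \QP_k$ with $\pi_1(P\circ A) = p_1$ and $\pi_2(P\circ A) = p_2$. The key observation is that $\mathcal{L}$ is exactly the CRT lifting map applied to the residue pair $(p_1,p_2)$: by Theorem \ref{CRT-2}, if $g \in \QP$ has $q$-degree $< d_1+d_2$, then $g = \pi_3(\pi_2(g)\circ S_1\circ f_1 + \pi_1(g)\circ S_2\circ f_2) = \mathcal{L}(\pi_1(g),\pi_2(g))$. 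So $\mathcal{L}$ reconstructs the unique $q$-degree-$<d_1+d_2$ representative from its residues.

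First I would establish the forward direction: if $(p_1,p_2) \in \mathcal{C}_{F,A,k}$, pick $P\in\QP_k$ with $\Psi_A(P) = (p_1,p_2)$, so $p_i = \pi_i(P\circ A)$. Since $\deg_q(P\circ A) = \deg_q(P) + \alpha < k + \alpha < d_1+d_2$ (using $k < d_1+d_2-\alpha$), Theorem \ref{CRT-2} gives $\pi_3(p_2\circ S_1\circ f_1 + p_1\circ S_2\circ f_2) = P\circ A$, i.e. $\mathcal{L}(p_1,p_2) = P\circ A = \mathcal{M}_A(P) \in \mathcal{M}_A(\QP_k) = V_A$. For the converse: if $\mathcal{L}(p_1,p_2) = g \in V_A$, write $g = P\circ A$ with $P \in \QP_k$. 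By Lemma \ref{lemtech}, $g' := p_2\circ S_1\circ f_1 + p_1\circ S_2\circ f_2$ satisfies $g' - \mathcal{L}(p_1,p_2) \in (f_1\vee_l f_2)_l$, hence $\pi_i(g') = \pi_i(\mathcal{L}(p_1,p_2)) = \pi_i(g)$ for $i=1,2$ (since $f_i$ divides $f_1\vee_l f_2$ on the right). One must also check $\pi_i(g') = p_i$ directly — this is the content of the computation inside the proof of Lemma \ref{lemtech}: reducing $g'$ mod $f_1$ kills the $\circ f_1$ term and $\pi_1(p_1\circ S_2\circ f_2) = p_1$ because $S_2$ inverts $f_2$ mod $f_1$ and $\deg_q(p_1) < d_1$; symmetrically mod $f_2$. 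Therefore $p_i = \pi_i(g) = \pi_i(P\circ A) = \Psi_A(P)_i$, so $(p_1,p_2) \in \mathcal{C}_{F,A,k}$.

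The main obstacle, and the point needing the most care, is the degree bookkeeping: one must verify that the representative produced by $\mathcal{L}$ has $q$-degree strictly below $d_1+d_2$ so that $\pi_3$ is faithful, and separately that $p_1, p_2$ genuinely have $q$-degrees below $d_1, d_2$ (which is built into the domain $\QP_{d_1}\times\QP_{d_2}$ but must be invoked when identifying $\pi_i(p_i\circ S_j\circ f_j)$ with $p_i$). A subtle secondary point is that $\mathcal{L}$ lands in $\QP_{d_1+d_2}$ whereas $V_A = \mathcal{M}_A(\QP_k) \subset \QP_{k+\alpha}$; the containment $\mathcal{L}(p_1,p_2)\in V_A$ forces the $q$-degree of the reconstructed polynomial down to at most $k+\alpha-1 < d_1+d_2-1$ automatically, and one should note that the codeword $(p_1,p_2)$ then necessarily comes from a $P$ of $q$-degree $<k$ because $\mathcal{M}_A$ restricted to $q$-degrees is degree-preserving (injective on $\QP$ since $A\neq 0$). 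I would spell this injectivity out explicitly, as it is what makes the correspondence $(p_1,p_2)\leftrightarrow P$ well-defined and the equivalence tight.
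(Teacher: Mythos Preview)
Your proof is correct. The paper does not actually supply a proof for this lemma: it is stated as a direct consequence of the two-modulus CRT machinery (Theorem~\ref{CRT-2} and Lemma~\ref{lemtech}) developed earlier, and the subsequent Proposition~\ref{prop:parity2} is proved in one line assuming the lemma. The analogous general statement (that $\Phi$ is a left inverse of $\Pi$) is given a short proof later, and your argument is precisely the two-block specialization of that reasoning, so your approach matches the paper's implicit one.

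One small cosmetic point: when you write ``By Lemma~\ref{lemtech}, $g' - \mathcal{L}(p_1,p_2) \in (f_1\vee_l f_2)_l$,'' you do not actually need Lemma~\ref{lemtech} there; that containment is immediate from the definition of $\pi_3$ as the remainder under right division by $f_1\vee_l f_2$. Lemma~\ref{lemtech} (or rather the computation inside its proof) is what you use in the next step to verify $\pi_i(g') = p_i$, and you do invoke it correctly at that point. This is purely a matter of citation placement and does not affect the validity of the argument.
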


\begin{proposition}
      \label{prop:parity2}
      Let $K_A$ satisfying $V_A = \ker(K_A)$. Then $(p_1,p_2) \in \mathcal{C}_{F,A,k}$ iff $K_A \circ \mathcal{L}(p_1,p_2) = 0$. Any matrix representation of $K_A \circ \mathcal{L}$ yields a parity check matrix.
\end{proposition}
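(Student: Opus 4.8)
The plan is to chain the characterisation of the preceding lemma with the defining property of $K_A$ and then read off the matrix statement; there is essentially no new content beyond linear-algebraic bookkeeping. As a preliminary I would record that $\mathcal{L}$ is $\mathbb{F}_q$-linear — it is built from right composition with the fixed $q$-polynomials $S_1\circ f_1$ and $S_2\circ f_2$, the reduction $\pi_3$, and addition, each of which is $\mathbb{F}_q$-linear — and that its image lies in $\QP_{d_1+d_2}$ by construction of $\pi_3$. Moreover $V_A=\mathcal{M}_A(\QP_k)\subseteq\QP_{d_1+d_2}$: every $g\circ A$ with $\deg_q(g)<k$ has $q$-degree at most $k-1+\alpha<d_1+d_2$, using the hypothesis $k<d_1+d_2-\alpha$. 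Hence $K_A$, a map on $\QP_{d_1+d_2}$ with $\ker K_A=V_A$, may legitimately be precomposed with $\mathcal{L}$, and $K_A\circ\mathcal{L}$ is a well-defined $\mathbb{F}_q$-linear map on $\QP_{d_1}\times\QP_{d_2}$.

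Next I would write out the equivalences: for $(p_1,p_2)\in\QP_{d_1}\times\QP_{d_2}$,
\[
(p_1,p_2)\in\mathcal{C}_{F,A,k}
\iff \mathcal{L}(p_1,p_2)\in V_A
\iff \mathcal{L}(p_1,p_2)\in\ker K_A
\iff (K_A\circ\mathcal{L})(p_1,p_2)=0 ,
\]
where the first step is the lemma, the second is the definition $V_A=\ker K_A$, and the third is the definition of composition. This yields $\mathcal{C}_{F,A,k}=\ker(K_A\circ\mathcal{L})$ as subsets of $\QP_{d_1}\times\QP_{d_2}$; note that the lemma gives equality of kernels, not merely an inclusion, which is exactly what makes the matrix a genuine parity check matrix rather than only a valid family of parity constraints.

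Finally, for the matrix claim I would fix the identification of $\QP_{d_1}\times\QP_{d_2}$ with $\mathbb{F}_{q^m}^{n}$ sending $(p_1,p_2)$ to the concatenation of the coefficient vectors of $p_1$ and $p_2$, with $n=d_1+d_2$; this is precisely the representation under which the code has length $n$. Picking this basis on the source and any basis on the target of $K_A\circ\mathcal{L}$, let $H$ be the resulting matrix of $K_A\circ\mathcal{L}$; then $H\mathbf{x}=0$ exactly when $\mathbf{x}$ represents a codeword, so $\ker H=\mathcal{C}_{F,A,k}$ and $H$ is a parity check matrix (a minimal one is obtained by row reduction if desired). I do not expect a real obstacle here: the mathematical substance is carried by the lemma. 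The one point that deserves an explicit sentence — and the nearest thing to a difficulty — is the $q$-degree bookkeeping above, which guarantees that $V_A$ already lies inside $\QP_{d_1+d_2}$, so that the reduction $\pi_3$ occurring inside $\mathcal{L}$ discards no information and the composite $K_A\circ\mathcal{L}$ faithfully detects membership in the code.
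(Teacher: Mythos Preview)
Your proof is correct and follows essentially the same approach as the paper: chain the preceding lemma with the defining equality $V_A=\ker(K_A)$. The paper's own proof is the single line ``Immediate from $\mathrm{Im}(\mathcal{M}_A|_{\QP_k}) = V_A = \ker(K_A)$''; you have simply unpacked the degree bookkeeping and the matrix statement more carefully, which is fine.
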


\begin{proof}
      Immediate from $\mathrm{Im}(\mathcal{M}_A|_{\QP_k}) = V_A = \ker(K_A)$.
\end{proof}

\subsubsection{General Case}
\label{general}

We now extend the construction to $s \geq 2$ blocks. Let $F = (f_1,...,f_s)$ with $\deg_q(f_i) = d_i$, $n = \sum_{i=1}^s d_i$, and $k < n - \alpha$. Define:

\begin{itemize}
      \item $h = \bigvee_{i=1}^s f_i$ (least common left multiple)
      \item $h_j = \bigvee_{i \neq j} f_i$ with $h_j \wedge_r f_j = X$
      \item $S_j \equiv h_j^{-1} \mod f_j$ (CRT inverses)
\end{itemize}

The generalized map becomes:
\[
      \Phi : \prod_{i=1}^s \QP_{d_i} \to \QP_{n}, \quad
      (P_1,...,P_s) \mapsto \pi\left( \sum_{i=1}^s P_i \circ S_i \circ h_i \right)
\]
where $\pi$ is the remainder under right division by $h$.

\begin{theorem}
      $\Phi$ is a left inverse of $\Pi$ (a retraction of the injection $\Pi$).
\end{theorem}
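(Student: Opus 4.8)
The plan is to show that $\Phi \circ \Pi = \mathrm{id}$ on each finite-degree truncation, and then conclude by linearity and a degree argument exactly as in Theorem \ref{CRT2}. Concretely, I would first recall that $\Pi$ here denotes the tuple of remainder maps $g \mapsto (\pi_1(g),\dots,\pi_s(g))$ with values in $\prod_{i=1}^s \QP_{d_i}$ (identifying $\QP/(f_i)_l$ with representatives of $q$-degree $< d_i$), so that $\Pi$ restricted to $\QP_n$ is the relevant injection — injectivity follows because the LLCM $h = f_1 \vee_l \cdots \vee_l f_s$ has $q$-degree exactly $n = \sum d_i$ under the coprimality hypotheses, so no nonzero element of $\QP_n$ can be divisible on the right by $h$. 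Then for arbitrary $g \in \QP$ I would apply Theorem \ref{CRT2} with $G = \sum_{i=1}^s \pi_i(g) \circ S_i \circ h_i$ (matching the paper's $S_{1,i}$, $b_i$ to the present $S_i$, $h_i$) to get $g - G \in (h)_l$, hence $\pi(g - G) = 0$, i.e. $\Phi(\Pi(g)) = \pi(G) = \pi(g) = \pi_{1,s}(g)$.

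Next I would observe that this already gives the retraction property on the nose: for $g \in \QP_n$ we have $\deg_q(g) < n = \deg_q(h)$, so $\pi(g) = g$, and therefore $\Phi(\Pi(g)) = g$. Since $\Pi$ is $\mathbb{F}_q$-linear and $\Phi$ is $\mathbb{F}_q$-linear (it is a composition of the linear maps $P_i \mapsto P_i \circ S_i \circ h_i$, summation, and the remainder map $\pi$), this identity on $\QP_n$ says precisely that $\Phi$ is a left inverse of the injection $\Pi|_{\QP_n}$, which is the claimed statement. I would also note in passing that $\Phi$ is well-defined as a map on all of $\prod_{i=1}^s \QP_{d_i}$ — not just on the image of $\Pi$ — since each $S_i \circ h_i$ is a fixed $q$-polynomial and $\pi$ is defined on all of $\QP$; this is what makes "retraction" meaningful.

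The one genuine subtlety, and the step I would be most careful about, is the bookkeeping around the coprimality hypotheses and the claim $\deg_q(h) = n$. Theorem \ref{CRT2} is stated under the assumption $f_i \wedge_r b_i = X$ for the $b_i = \bigvee_{j\ne i} f_j$, which the present statement encodes as $h_j \vee_r f_j = X$ (a typo for $h_j \wedge_r f_j = X$, i.e. right-coprimality); I would make explicit that these are the running hypotheses and that they force, inductively, $\deg_q(f_1 \vee_l \cdots \vee_l f_i) = d_1 + \cdots + d_i$, using the general identity $\deg_q(P \vee_l Q) = \deg_q(P) + \deg_q(Q) - \deg_q(P \wedge_r Q)$ for the skew polynomial ring (a consequence of the Bézout relations produced by the LLCM algorithm). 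With $\deg_q(h) = n$ in hand, the degree comparison $\deg_q(g) < n$ for $g \in \QP_n$ is immediate and the proof closes. I do not expect any computational obstacle beyond this; the content is entirely a repackaging of Theorem \ref{CRT2} plus the elementary fact that $\Pi$ is injective on $\QP_n$.

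\begin{proof}
Under the running hypotheses ($f_i$ coprime with $h_i = \bigvee_{j \neq i} f_i$ for every $i$), the left least common multiple $h = f_1 \vee_l \cdots \vee_l f_s$ satisfies $\deg_q(h) = d_1 + \cdots + d_s = n$: this follows by induction from $\deg_q(P \vee_l Q) = \deg_q(P) + \deg_q(Q) - \deg_q(P \wedge_r Q)$ and the coprimality assumptions. Consequently $\Pi$ restricted to $\QP_n$ is injective, since any $g$ with $\Pi(g) = 0$ is a right multiple of every $f_i$, hence of $h$, and cannot have $q$-degree $< n$ unless $g = 0$; so $\Pi|_{\QP_n} : \QP_n \to \prod_{i=1}^s \QP_{d_i}$ is an injection.

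Now let $g \in \QP$ be arbitrary and put $G = \sum_{i=1}^s \pi_i(g) \circ S_i \circ h_i$, so that $\Phi(\Pi(g)) = \pi(G)$. By Theorem \ref{CRT2} (with $b_i = h_i$ and $S_{1,i} = S_i$) we have $g - G \in (h)_l$, hence $\pi(g - G) = 0$, that is $\pi(G) = \pi(g)$. Therefore
\[
\Phi(\Pi(g)) = \pi(G) = \pi(g).
\]
When $g \in \QP_n$ we have $\deg_q(g) < n = \deg_q(h)$, so $\pi(g) = g$, and thus $\Phi(\Pi(g)) = g$ for all $g \in \QP_n$. Since both $\Pi$ and $\Phi$ are $\mathbb{F}_q$-linear (the latter being a composition of the maps $P_i \mapsto P_i \circ S_i \circ h_i$, summation, and the remainder map $\pi$), this identity exhibits $\Phi$ as a left inverse of the injection $\Pi|_{\QP_n}$, i.e. a retraction of $\Pi$.
\end{proof}
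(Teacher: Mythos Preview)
Your proof is correct and follows essentially the same route as the paper's. The paper's proof shows $\pi_i(Q) = \pi_i(P)$ for each $i$ (which is equivalent to $P - Q \in (h)_l$, i.e.\ exactly the content of Theorem~\ref{CRT2}) and then invokes injectivity of $\Pi$; you instead cite Theorem~\ref{CRT2} directly and unpack the degree comparison, which is cleaner and makes the dependence on the coprimality hypotheses and on $\deg_q(h) = n$ explicit where the paper leaves these implicit.
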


\begin{proof}
      For $P \in \QP_{k+\alpha}$, let $Q = \sum_{i=1}^s \pi_i(P) \circ S_i \circ h_i$. Since $h_j$ is a left multiple of $f_i$ for $j \neq i$, we have $\pi_i(Q) = \pi_i(P)$. The result follows from the injectivity of $\Pi$.
\end{proof}

\begin{proposition}
      \label{prop:parity}
      A word $\mathbf{P} = (P_1,...,P_s)$ is in $\mathcal{C}_{F,k,A}$ iff $K_A \circ \Phi(\mathbf{P}) = 0$, where $K_A$ is as in Proposition~\ref{prop:parity2}.
\end{proposition}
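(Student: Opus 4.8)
The plan is to combine the retraction property of $\Phi$ from the preceding theorem with the kernel characterization already used in Proposition~\ref{prop:parity2}, so that the argument is essentially a diagram chase over $\QP$. First I would unwind the definition: $\mathbf{P} = (P_1,\dots,P_s) \in \mathcal{C}_{F,A,k}$ means precisely that there exists $g \in \QP_k$ with $P_i = \pi_i(g \circ A)$ for all $i$, i.e. $\mathbf{P} = \Pi(g \circ A)$. Since $g \circ A$ has $q$-degree at most $k + \alpha - 1 < n = \deg_q(h)$, applying $\Phi$ and using that $\Phi$ is a left inverse of $\Pi$ on $\QP_{<n}$ (the retraction statement, valid because $\pi_i(\Phi(\Pi(R))) = \pi_i(R)$ for every $i$ forces $\Phi(\Pi(R)) = R$ once $\deg_q(R) < n$, by injectivity of $\Pi$ on $\QP_{<n}$), I get $\Phi(\mathbf{P}) = g \circ A = \mathcal{M}_A(g) \in \mathcal{M}_A(\QP_k) = V_A$. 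Conversely, if $\Phi(\mathbf{P}) \in V_A$, write $\Phi(\mathbf{P}) = g \circ A$ with $g \in \QP_k$; then $\pi_i(g \circ A) = \pi_i(\Phi(\mathbf{P})) = P_i$ (again using that $h_j$ is a left multiple of $f_i$ for $j \neq i$, so only the $i$-th summand of $\Phi$ survives mod $f_i$ and contributes $\pi_i(P_i \circ S_i \circ h_i) = P_i$ since $S_i \equiv h_i^{-1} \bmod f_i$), hence $\mathbf{P} = \Psi_A(g) \in \mathcal{C}_{F,A,k}$. This shows $\mathbf{P} \in \mathcal{C}_{F,A,k} \iff \Phi(\mathbf{P}) \in V_A$.

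Now I would invoke the choice of $K_A$: by definition $V_A = \ker(K_A)$ (exactly as in Proposition~\ref{prop:parity2}, with $K_A$ a $q$-polynomial whose associated left-ideal kernel, or equivalently whose matrix as an $\mathbb{F}_q$-linear map, cuts out $V_A$). Composing, $\Phi(\mathbf{P}) \in V_A \iff K_A \circ \Phi(\mathbf{P}) = 0$. Chaining the two equivalences gives $\mathbf{P} \in \mathcal{C}_{F,A,k} \iff K_A \circ \Phi(\mathbf{P}) = 0$, which is the claim. To turn this into an explicit parity-check matrix one then fixes $\mathbb{F}_q$-bases of $\prod_i \QP_{d_i} \cong \mathbb{F}_{q^m}^n$ and of the codomain of $K_A$, and writes down the matrix of the composed $\mathbb{F}_q$-linear map $K_A \circ \Phi$; its kernel is $\mathcal{C}_{F,A,k}$ by the equivalence just established.

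The one point needing care — and the main obstacle — is the $q$-degree bookkeeping that makes the retraction of $\Phi$ applicable to $\Phi(\mathbf{P})$ for an \emph{arbitrary} input $\mathbf{P}$, not just one of the form $\Pi(g \circ A)$. The clean way around it is to note that $\Phi$ lands in $\QP_n$ by construction (its output is a remainder modulo $h$, and $\deg_q(h) = n$), so for any $\mathbf{P}$ the element $R := \Phi(\mathbf{P})$ satisfies $\deg_q(R) < n$; then $\pi_i(R) = \pi_i(\Phi(\mathbf{P})) = P_i$ is the defining property of $\Phi$ combined with $S_i \equiv h_i^{-1} \bmod f_i$, so $\Pi(R) = \mathbf{P}$, and conversely $R$ is the \emph{unique} preimage of $\mathbf{P}$ under $\Pi$ in $\QP_{<n}$. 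This is precisely where the hypothesis $k < n - \alpha$ enters: it guarantees $g \circ A \in \QP_{<n}$ whenever $g \in \QP_k$, so the codeword's CRT-lift coincides with $g \circ A$ rather than with its reduction modulo $h$, which is what lets us read off membership in $V_A = \mathcal{M}_A(\QP_k)$. Everything else is formal; no nontrivial computation is required beyond the linearity of $K_A$ and $\Phi$, which is inherited from that of composition and of the $\pi_i$.
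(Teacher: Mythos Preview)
Your proof is correct and follows the same approach as the paper. The paper's proof is a single line, ``Direct consequence of $\Phi(\mathbf{P}) \in V_A \Leftrightarrow \mathbf{P} \in \mathcal{C}_{F,A,k}$,'' whereas you have carefully unpacked and justified both directions of that equivalence (using the retraction property of $\Phi$ and the CRT bijection between $\QP_{<n}$ and $\prod_i \QP_{<d_i}$) before composing with $V_A = \ker(K_A)$; the extra care you take with the degree bookkeeping and with verifying $\pi_i(\Phi(\mathbf{P})) = P_i$ for arbitrary $\mathbf{P}$ is exactly what the paper leaves implicit.
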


\begin{proof}
      Direct consequence of $\Phi(\mathbf{P}) \in V_A \Leftrightarrow \mathbf{P} \in \mathcal{C}_{F,k,A}$.
\end{proof}

\noindent As in the two-blocks case, each matrix expressing $K_A \circ \Phi$ gives a parity check matrix of  $\mathcal{C}_{F,k,A}$.

\section{A special case and a first decoding algorithm} \label{simpdecod}

In this section, we build $q$CRT codes with moduli linearized polynomials with coefficients in $\mathbb{F}_q$, so that the
map $\Pi$ of the Chinese Remainder Theorem and its lifting do not change the
support of the involved polynomials. Using this property, we design a probabilistic algorithm with a bound on its probability of failure, under the assumption that errors are uniformly distributed. \color{black}\\
The idea of this algorithm is to first determine the lifted
error support by computing a basis of this space, and then to compute the message if possible. The
support of the lifted is obtained using the Chinese remainder lifting. \\

The algorithm will be based on the fact that we can recover part of the lifted error only by considering the terms of higher degree in the lifted corrupted codeword. This property also appears in \cite{yu_polynomial_2012}. The strategy of the algorithm is the same as in \cite{yu_polynomial_2012}: from the information given by the upper part of the lifted error, we can compute the lower part of the lifted error, and then recover the message associated to the corrupted codeword.
\color{black}

\noindent Let $F = (f_1, \cdots, f_s) \in (\mathbb{F}_q \langle X^q \rangle)^s$ with $f_i$
of $q$-degree $d_i$ for all $i \in \{ 1, \cdots, s \}.$ \\
We denote, for all $i \in \{ 1, \cdots, s\},$ $b_i = \underset{j \neq
            i}{\vee_l} f_j$.

\noindent Suppose that $F$ satisfies the conditions to apply the Chinese Remainder Theorem \ref{CRT2}: we suppose that for all $i \in \{ 1, \cdots, s\},$ $f_i$ is coprime with $b_i$. Let $S_{1, i}$ and $S_{2, i}$ such that $S_{1, i} \circ b_i +
      S_{2, i} \circ f_i = X$ (i.e. $S_{1, i}$ is the inverse of $b_i$ mod $f_i$, and   $S_{2, i}$ is the inverse of $f_i$ mod $b_i$).\\
Denote $n = \underset{i = 1}{\overset{s}{\sum}} d_i$. Let $k < n$ be an
integer. Let $A \in \mathbb{F}_{q^m}\langle X^q \rangle$ be such that $\deg_q(A)+k<n.$ We denote $\alpha = \deg_q(A)$. \\

\noindent Let $\mathcal{C}=\mathcal{C}_{F, k,A}$ be the $q$CRT code associated
to $F$, $k$ and $A$.
We denote
$$\Phi : \left\{\begin{array}{l}
            \mathbb{F}_{q^m} \langle X^q \rangle_{d_1} \times \cdots \times
            \mathbb{F}_{q^m} \langle X^q \rangle_{d_s} \longrightarrow \mathbb{F}_{q^m}
            \langle X^q \rangle_{n} \\
            (p_1, \cdots, p_s) \longmapsto p
      \end{array}\right.$$ the lifting of the Chinese Remainder Theorem (since $\Pi$ is injective on $\mathbb{F}_{q^m} \langle X^q \rangle_{k+\alpha}$, it is the left inverse over its image) i.e. for all $p \in \mathbb{F}_{q^m} \langle X^q \rangle_{n},~ \Phi \circ \Pi(p) = p. $\\
Let $P \in \mathbb{F}_{q^m} \langle X^q \rangle_{k}$.
We denote $c= \Pi (P \circ A) = (\pi_1 (P \circ A), \cdots,
      \pi_s (P\circ A))$. Consider $y=c+e$, where
$e \in \QP_n$ is an error.
As $\Phi $ is $\mathbb{F}_{q^m}$-linear, \[ \Phi (y)
      = \Phi (c+e) = \Phi (c) + \Phi
      (e) = P \circ A + \Phi(e). \]
As in \cite{yu_polynomial_2012}, we will write the lifted error as the sum of an upper part and a lower part:\\
\color{black}
We denote $\Phi
      (e) = \underline{E} + \overline{E}$ where $\underline{E} \in
      \mathbb{F}_{q^m} \langle X^q \rangle_{k+\alpha}$ and $\overline{E} =
      \underset{}{\overset{n - 1}{\underset{i =k+ \alpha}{\sum}} e_i \cdot X^{q^i}}$.
We then have
\begin{equation} \label{remontee}
      \Phi(y) =  P \circ A + \underline{E} + \overline{E},
\end{equation}
and as $\deg_q( P \circ A + \underline{E})<k+ \alpha,$ we can deduce $\overline{E}$ from the monomials of $\Phi(y)$ of $q$degree superior or equal to $k+\alpha$. \\

\noindent Denote $\mathcal{E}:=  supp( E )$ the $\mathbb{F}_q$-vector subspace of
$\mathbb{F}_{q^m}$ generated by the coefficients of $E$ and in the same way,
we denote $\underline{\mathcal{E}} := \supp( \underline{E} )$
and $\overline{\mathcal{E}}: = \supp( \overline{E} )$ those
associated to $\underline{E}$ and $\overline{E}$ respectively.

\subsection{Computing the error support } \label{section_Computing_the_error_support}

This subsection is devoted to the computation of a basis of the support of
the lifting of the error $\mathcal{E}$. The situation is very similar to the one with augmented Gabidulin codes (see \cite{aragon_minrank_2025-1}). The case of interest is when $\dim \left( \overline{\mathcal{E}}
      \right) = \dim (\mathcal{E})$. We can bound the probability that
$\overline{\mathcal{E}} =\mathcal{E}$,  supposing that the error follows a uniform distribution. \color{black}

\begin{proposition} \label{proba}
      Let $r \in \{1, \dots, \min(m,n-\alpha-k)\}.$ We suppose that $e$ follows a uniform distribution on $\Fqm^n.$
      Knowing that $w_r(E)=r$, we have $\overline{\mathcal{E}} =\mathcal{E}$ with probability \[ q^{r(k+\alpha)} \prod_{i=0}^{r-1} \frac{(q^{n-(k+\alpha)}-q^i)}{(q^n -q^i)}.\]
\end{proposition}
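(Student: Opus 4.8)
The plan is to translate the statement $\overline{\mathcal E}=\mathcal E$ into a pure linear-algebra question over $\mathbb F_q$ and then count. Recall $E=\Phi(e)$, and since all the $f_i$, $b_i$ and the Bézout cofactors $S_{1,i},S_{2,i}$ lie in $\mathbb F_q\langle X^q\rangle$, the lifting map $\Phi$ is $\mathbb F_q$-linear in a way that does not enlarge supports; more precisely, $\Phi$ acts on coefficient vectors by a fixed invertible matrix over $\mathbb F_q$ (this is exactly the property the whole section is built on, and it is what makes $\supp(\Phi(e))$ behave well). Hence it suffices to work directly with $E=\sum_{i=0}^{n-1}E_i X^{q^i}$, viewing $(E_0,\dots,E_{n-1})\in\mathbb F_{q^m}^n$. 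Writing $\underline E$ for the truncation to degrees $<k+\alpha$ and $\overline E$ for the tail, we have $\mathcal E=\langle E_0,\dots,E_{n-1}\rangle$ and $\overline{\mathcal E}=\langle E_{k+\alpha},\dots,E_{n-1}\rangle$. The condition ``$w_r(E)=r$'' says $\dim_{\mathbb F_q}\mathcal E=r$, and we want the probability, conditioned on this event, that the last $n-(k+\alpha)$ coordinates already span all of $\mathcal E$.

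**Reduction via a fixed subspace and conditional uniformity.**

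First I would condition further on the $r$-dimensional subspace $W=\mathcal E$ itself: by symmetry of the uniform distribution on $\mathbb F_{q^m}^n$ under the action of $\mathrm{GL}_m(\mathbb F_q)$ on $\mathbb F_{q^m}\cong\mathbb F_q^m$, the conditional distribution of $(E_0,\dots,E_{n-1})$ given $\{\mathcal E=W\}$ is uniform on the set of $n$-tuples of elements of $W$ that span $W$. The number of such spanning tuples is $\prod_{i=0}^{r-1}(q^r-q^i)$ — equivalently $|\mathrm{Surj}(\mathbb F_q^n\twoheadrightarrow\mathbb F_q^r)|$. Among these, I need to count those whose last $n-(k+\alpha)$ entries $(E_{k+\alpha},\dots,E_{n-1})$ already span $W$: this is (number of spanning $(n-(k+\alpha))$-tuples in $W$) $\times$ (number of arbitrary $(k+\alpha)$-tuples in $W$), namely $\bigl[\prod_{i=0}^{r-1}(q^{\,n-(k+\alpha)}-q^i)\bigr]\cdot q^{r(k+\alpha)}$. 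Taking the ratio gives
\[
\Pr\bigl[\overline{\mathcal E}=\mathcal E \,\big|\, \mathcal E=W\bigr]
= \frac{q^{r(k+\alpha)}\prod_{i=0}^{r-1}(q^{\,n-(k+\alpha)}-q^i)}{\prod_{i=0}^{r-1}(q^{r}-q^i)}.
\]
Since this is independent of $W$, averaging over the (conditionally determined) choice of $W$ of dimension $r$ leaves it unchanged.

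**Finishing by folding in the dimension event.**

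The displayed formula is not yet in the stated shape: the stated answer has $q^n-q^i$ in the denominator rather than $q^r-q^i$. The discrepancy is accounted for by the conditioning on $w_r(E)=r$: one must multiply the ``given $\mathcal E=W$'' probability by the ratio of the normalizing constants, i.e. rewrite using $\Pr[\overline{\mathcal E}=\mathcal E\wedge w_r(E)=r]/\Pr[w_r(E)=r]$. Concretely, $\Pr[w_r(E)=r]=\binom{m}{r}_q\prod_{i=0}^{r-1}(q^r-q^i)/q^{mn}$ and $\Pr[\overline{\mathcal E}=\mathcal E\wedge \dim\mathcal E=r]=\binom{m}{r}_q\,q^{r(k+\alpha)}\prod_{i=0}^{r-1}(q^{\,n-(k+\alpha)}-q^i)/q^{mn}$, and dividing yields exactly $q^{r(k+\alpha)}\prod_{i=0}^{r-1}\frac{q^{\,n-(k+\alpha)}-q^i}{q^{r}-q^i}$; the reformulation with $q^n-q^i$ in the denominator comes from the alternative route of not conditioning on $\dim\mathcal E$ at the intermediate stage — instead directly count, among \emph{all} $q^{mn}$ choices of $e$ (equivalently of $E$), the tuples where the tail spans and the head is arbitrary inside that span, obtaining $\sum$ over $r$-dim $W$ of (spanning tails in $W$)$\times$(arbitrary heads), i.e. $\binom{m}{r}_q\,q^{r(k+\alpha)}\prod_{i=0}^{r-1}(q^{n-(k+\alpha)}-q^i)$, then dividing by the count $\binom{m}{r}_q\prod_{i=0}^{r-1}(q^n-q^i)$ of all tuples in $\mathbb F_{q^m}^n$ whose span has dimension exactly $r$ — wait, that count is $\binom{m}{r}_q\prod_{i=0}^{r-1}(q^n-q^i)$ only if one uses the surjection count with source dimension $n$, which is indeed $\prod_{i=0}^{r-1}(q^n-q^i)$. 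So the cleanest write-up fixes the source space as $\mathbb F_q^n$ from the start: the event $\{w_r(E)=r\}$ corresponds to $(E_0,\dots,E_{n-1})$ inducing a surjection $\mathbb F_q^n\to W$ for some $r$-dim $W$, counted by $\binom{m}{r}_q\prod_{i=0}^{r-1}(q^n-q^i)$, and the favorable sub-event additionally requires the projection onto the last $n-(k+\alpha)$ coordinates to still be surjective onto $W$, counted by $\binom{m}{r}_q\,q^{r(k+\alpha)}\prod_{i=0}^{r-1}(q^{n-(k+\alpha)}-q^i)$; the $\binom{m}{r}_q$ cancels and the formula follows.

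**Main obstacle.**

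The only real subtlety — and the step I would be most careful about — is the reduction ``$\Phi$ does not affect the relevant support structure,'' i.e. justifying that we may replace $\Phi(e)$ by a uniformly random coefficient vector and still read off $\mathcal E$ and $\overline{\mathcal E}$ as spans of coordinate subsets. This needs the explicit $\mathbb F_q$-linearity of $\Phi$ on coefficient vectors together with the fact that $e$ is uniform on $\mathbb F_{q^m}^n$ (so $\Phi(e)$ is too, since $\Phi$ restricted appropriately is a bijection on coefficient vectors up to the known degree bookkeeping), and that $\overline{\mathcal E}$ is genuinely the span of the \emph{top} $n-(k+\alpha)$ coefficients of $\Phi(y)$ — which was already established around equation~\eqref{remontee}. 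Once that is pinned down, the remainder is the standard $q$-analogue surjection count and poses no difficulty.
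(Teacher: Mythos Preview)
Your overall strategy and your final ``cleanest write-up'' are correct and coincide with the paper's proof: both compute the conditional probability as the ratio of two explicit counts and the $\binom{m}{r}_q$ factors cancel. The paper phrases the two counts as numbers of $m\times n$ matrices over $\mathbb F_q$ of rank $r$ (for the denominator) and of such matrices whose last $n-(k+\alpha)$ columns already have rank $r$ (for the numerator), which is exactly your last paragraph in different clothing.

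However, there is a genuine computational error in your second section that causes all of the confusion in the third. You write ``The number of such spanning tuples is $\prod_{i=0}^{r-1}(q^r-q^i)$ --- equivalently $|\mathrm{Surj}(\mathbb F_q^n\twoheadrightarrow\mathbb F_q^r)|$.'' This is wrong: $\prod_{i=0}^{r-1}(q^r-q^i)=|\mathrm{GL}_r(\mathbb F_q)|$ counts ordered \emph{bases} of $W$ (spanning $r$-tuples), whereas the number of $n$-tuples in $W$ that span $W$, i.e.\ $|\mathrm{Surj}(\mathbb F_q^n\twoheadrightarrow W)|$, is $\prod_{i=0}^{r-1}(q^n-q^i)$. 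With the correct denominator your displayed conditional probability
\[
\Pr\bigl[\overline{\mathcal E}=\mathcal E \,\big|\, \mathcal E=W\bigr]
= \frac{q^{r(k+\alpha)}\prod_{i=0}^{r-1}(q^{\,n-(k+\alpha)}-q^i)}{\prod_{i=0}^{r-1}(q^{n}-q^i)}
\]
is already the stated answer, and the entire ``folding in the dimension event'' step is unnecessary. The same error propagates into your formula $\Pr[w_r(E)=r]=\binom{m}{r}_q\prod_{i=0}^{r-1}(q^r-q^i)/q^{mn}$, which should have $q^n-q^i$; your ``wait'' aside later in that paragraph is where you implicitly catch and correct this.

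Your remark under ``Main obstacle'' is well taken: the paper disposes of this in one line (``$E$ also follows a uniform distribution on this set''), which is justified because $\Phi$ is the inverse of the CRT isomorphism $\Pi$ on $\mathbb F_{q^m}\langle X^q\rangle_n$ and hence a bijection.
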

\begin{remark}
      If $w_r(E)=0,$ then $\overline{\mathcal{E}} =\mathcal{E}$, and if $w_r(E)>n- \alpha-k,$ then $\overline{\mathcal{E}} \subsetneq \mathcal{E}.$
\end{remark}

\begin{proof}Since $\overline{\mathcal{E}}  \subset \mathcal{E},$ $\overline{\mathcal{E}}$ and $\mathcal{E}$ are equal if and only if they have the same dimension.
      As $e$ follows a uniform distribution on the set of vectors of $\Fqm^n$, $E$ also follows a uniform distribution on this set. Hence, \[\mathbb{P}(w_r(\overline{E})=r|w_r(E)=r) = \frac{\text{Card}( \{v \in  \Fqm^n,~ w_r(v_2) = r \text{ and } w_r(v) = r\})}{\text{Card}( \{v \in  \Fqm^n,~ w_r(v) = r\}},\]
      where for every $v \in \mathbb{F}_{q^m}^n,$ $v_2$ denotes the vector composed of the last $n-k-\alpha$ coordinates of $v.$

      \noindent On the one hand, the set $\{v \in  \Fqm^n,~ w_r(v) = r\}$ has the same cardinality as the set of matrices of size $m\times n$ with coefficients in $\Fq$ of rank $r.$ Therefore (see \cite{pless_handbook_nodate}), \[\text{Card}( \{v \in  \Fqm^n, w_r(v) = r\} = \prod_{i=0}^{r-1} \frac{(q^n-q^i)(q^m-q^i)}{(q^r- q^i)}.\]
      On the other hand, {\small \[\text{Card}( \{v \in  \Fqm^n,~ w_r(v_2) = r \text{ and } w_r(v) = r\}) = q^{r(k+\alpha)} \prod_{i=0}^{r-1} \frac{(q^{n-(k+\alpha)}-q^i)(q^m-q^i)}{(q^r- q^i)}.\] }

      \noindent Indeed, building a vector in this set is equivalent to build a vector in $\Fqm^n$ of two blocks of size $k+\alpha$ and $n-(k+\alpha)$, where the left block has rank weight $r$ and all the coordinates of the right block belong to the the support of the first block. If we build such a vector, the number of possibilities for the left block is the number of matrices of size $m \times (n-(k+\alpha))$ with coefficients in $\Fq$ of rank $r,$ that is \[\prod_{i=0}^{r-1} \frac{(q^{n-(k+\alpha)
                        }-q^i)(q^m-q^i)}{(q^r- q^i)}. \] Once the left block is chosen, as its support has dimension $r,$ the right block is built by choosing uniformly $k+\alpha$ coordinates in a space of cardinality $q^r$, and therefore the number of possibilities is $q^{r(k+\alpha)}.$

      \noindent Consequently, \[ \mathbb{P}(w_r(\overline{E})=r|w_r(E)=r) = q^{r(k+\alpha)} \prod_{i=0}^{r-1} \frac{(q^{n-(k+\alpha)}-q^i)}{(q^n -q^i)}. \]
\end{proof}

\noindent Depending on the choice of the parameters, this probability can be close to one for many values of $r$, where $r$ is the rank weight of the lifting of the error. This means that the second block can allow to recover all the support of the lifting of the error with high probability for many values of $r$. Note that this probability does not depend on the value of $m$.  \\

\noindent Since for every $i \in \{1, \dots, n\},$ the coefficients of the polynomial $f_i$ are elements of $\Fq,$ the support of the lifted error is contained within the support of the error (see Proposition \ref{inclusion support}), $\operatorname{supp}(\Phi(e)) \subseteq \operatorname{supp}(e)$. This fact guarantees that the dimension of the support of the lifting of the error will be smaller than the one of the error. Consequently, if the error's support has dimension $r$, the probability that $\dim \left( \overline{\mathcal{E}}
      \right) = \dim (\mathcal{E})$ is bounded by the value given in Proposition \ref{proba}.

\noindent To show this property, we need the following lemmas.

\begin{lemma} \label{compo fq}
      Let $g \in \QP$, and $f\in \Fq\langle X^q \rangle.$ Suppose $\supp(g) \subset S$,  where $S$ is  $\Fq$ vector subspace of $\Fqm$,  then \[\supp(g \circ f) \subset S.\]
      In particular,  \[\supp(g \circ f) \subset \supp(g).\]
\end{lemma}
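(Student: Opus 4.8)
The plan is to work directly from the term-by-term description of composition in $\GFX{q^m}$ and the fact that $f$ has coefficients in $\GF q$. First I would write $g = \sum_{i=0}^{d} g_i X^{q^i}$ with all $g_i \in S$ (this is the hypothesis $\supp(g)\subset S$), and $f = \sum_{j=0}^{e} f_j X^{q^j}$ with all $f_j \in \GF q$. Using the product rule $(a X^{q^i})\circ(b X^{q^j}) = a b^{q^i} X^{q^{i+j}}$, the composition expands as
\[
g\circ f \;=\; \sum_{i=0}^{d}\sum_{j=0}^{e} g_i\, f_j^{\,q^i}\, X^{q^{i+j}}.
\]
The key observation is that $f_j \in \GF q$, so $f_j^{q^i} = f_j$ for every $i$ (elements of $\GF q$ are fixed by the Frobenius $x\mapsto x^q$, hence by every $\theta^i$). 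Therefore the coefficient of $X^{q^{i+j}}$ in $g\circ f$ is $g_i f_j$, a $\GF q$-scalar multiple of $g_i$. Grouping by the exponent $\ell = i+j$, the $\ell$-th coefficient of $g\circ f$ is $\sum_{i+j=\ell} f_j\, g_i$, which is an $\GF q$-linear combination of the $g_i$'s.

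Next I would conclude the support inclusion: every coefficient of $g\circ f$ is an $\GF q$-linear combination of elements of $\{g_0,\dots,g_d\}$, all of which lie in $S$; since $S$ is an $\GF q$-subspace, each such coefficient lies in $S$, and hence $\supp(g\circ f) = \langle \text{coefficients of } g\circ f\rangle \subset S$. The ``in particular'' statement is the special case $S = \supp(g)$, which is itself an $\GF q$-subspace of $\GF{q^m}$ containing all the $g_i$, so it is a legitimate choice of $S$.

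This proof is essentially a direct computation, so there is no real obstacle; the only point requiring a moment of care is making explicit that $f_j \in \GF q \implies f_j^{q^i} = f_j$, which is exactly why restricting the coefficients of $f$ to the base field $\GF q$ (rather than $\GF{q^m}$) is what makes the statement true — over $\GF{q^m}$ the raised coefficients $f_j^{q^i}$ need not stay in the span of the $g_i$. I would state this Frobenius-fixed-point fact explicitly in one line and then let the expansion above carry the argument.
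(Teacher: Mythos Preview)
Your proof is correct and follows essentially the same route as the paper: expand $g\circ f$ term by term, note that $f_j\in\GF q$ forces $f_j^{q^i}\in\GF q$ (you make the slightly stronger observation $f_j^{q^i}=f_j$), and conclude that each coefficient $h_\ell=\sum_{i+j=\ell} g_i f_j^{q^i}$ is an $\GF q$-linear combination of the $g_i$ and hence lies in $S$. The ``in particular'' case is handled identically by taking $S=\supp(g)$.
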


\begin{proof}
      Denote $\displaystyle f = \sum_{i=0}^{d_f} f_i \cdot X^{q^i}$ and $\displaystyle g = \sum_{i=0}^{d_g} g_i \cdot X^{q^i}. $
      We have \[(g \circ f) = \underset{l = 0}{\overset{d_f + d_g}{\sum}} h_l \cdot X^{q^l},\text{ with }h_l = \underset{i + j = l}{\sum} g_i \cdot
            f_j^{q^i}.\] Since for all $ i \in \{ 1, \cdots, d_f\}, $ $f_i \in \Fq$, and for all  $ i \in \{ 1, \cdots, d_g\}, $ $g_i \in S$, for all $l \in \{0, \cdots, d_f + d_g\}$, we have $h_l \in S.$ Therefore $\supp(g \circ f) \subset S$.
\end{proof}

\begin{lemma}\label{reste}
      If $g \in \QP$, and $f\in \Fq\langle X^q \rangle,$ then the remainder $\pi(g)$ of $g$ in the right division by $f$ satisfies: \[ \supp(\pi(g)) \subset \supp(g).\]
\end{lemma}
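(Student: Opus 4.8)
The plan is to perform the right division explicitly and track how supports behave under the subtraction step of the Euclidean algorithm, using Lemma \ref{compo fq} as the engine. Recall that the remainder $\pi(g)$ is obtained by the \textbf{rquorem} algorithm: starting from $R \leftarrow g$, one repeatedly replaces $R$ by $R - T \circ f$ where $T = \LC(R) \cdot \LC(f)^{q^{d_f - d_R}} X^{q^{d_R - d_f}}$ is a single term whose leading coefficient lies in $\supp(R)$ (indeed it is a nonzero scalar multiple of the leading coefficient of $R$, since $\LC(f) \in \GF{q}^*$ and raising an element of $\GF q$ to a $q$-power fixes it, so $\LC(f)^{q^{d_f-d_R}} = \LC(f) \in \GF q$). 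The key observation is that at every stage of this loop, $R$ has its support contained in $\supp(g)$.

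First I would set $S = \supp(g)$ and argue by induction on the number of iterations of the loop that the current value of $R$ satisfies $\supp(R) \subset S$. The base case is $R = g$, which is immediate. For the inductive step, suppose $\supp(R) \subset S$ at the start of an iteration. The term $T = \LC(R)\cdot\LC(f)^{q^{d_f-d_R}} X^{q^{d_R-d_f}}$ has a single coefficient $\LC(R)\cdot\LC(f) \in S$ (using $\LC(f)\in\GF q$), so $\supp(T) \subset S$; viewing $T$ as a $q$-polynomial with one term and $f \in \GFX{q}$, Lemma \ref{compo fq} gives $\supp(T \circ f) \subset S$. Since $\supp(R - T\circ f) \subset \supp(R) + \supp(T\circ f) \subset S$, the new value of $R$ again has support in $S$. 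The loop terminates with $R = \pi(g)$, so $\supp(\pi(g)) \subset S = \supp(g)$, which is exactly the claim.

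Alternatively, and perhaps more cleanly, one can avoid the running induction by invoking the Bézout-type identity directly: by Lemma 9 (the correctness of \textbf{rquorem}) there exists $Q \in \GFX{q^m}$ with $g = Q \circ f + \pi(g)$, hence $\pi(g) = g - Q \circ f$. Applying Lemma \ref{compo fq} to $Q \circ f$ with the ambient space $S = \GF{q^m} \supset \supp(Q)$ is too weak; instead one takes $S = \supp(g)$ and must check $\supp(Q) \subset \supp(g)$ as well — which is precisely what the loop invariant above establishes for $Q$, since $Q$ accumulates exactly the terms $T$ whose coefficients lie in $\supp(R) \subset S$. So the inductive argument is doing the real work in either formulation.

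The only mild subtlety — and the one point worth stating carefully rather than the main obstacle — is the remark that $\LC(f)^{q^{d_f - d_R}} = \LC(f)$ because $\LC(f) \in \GF q$ is fixed by the $q$-power Frobenius; this is what prevents the quotient terms from leaving $\supp(g)$ and is exactly where the hypothesis $f \in \GFX{q}$ (rather than $f \in \GFX{q^m}$) is used, mirroring its role in Lemma \ref{compo fq}. Beyond that, the proof is a routine bookkeeping of the Euclidean division, and no genuine obstacle arises.
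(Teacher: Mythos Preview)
Your proof is correct and follows essentially the same approach as the paper: both argue by induction on the iterations of the \textbf{rquorem} loop, maintaining the invariant $\supp(R)\subset\supp(g)$, and both invoke Lemma~\ref{compo fq} together with the observation that $\LC(f)\in\GF q$ forces the single-term quotient at each step to have its coefficient in $\supp(g)$. Your side remark that the alternative formulation via $\pi(g)=g-Q\circ f$ still requires establishing $\supp(Q)\subset\supp(g)$, and hence the same loop invariant, is also accurate.
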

\begin{proof}
      We prove the result by induction on the steps of the algorithm of the right division of $g$ by $f.$ \\
      We denote, for $i\geq 1,$ $R_i$ the value of $R$ at the step $i$ in the algorithm \ref{rqorem}, and $R_0=g.$ We prove that for every $i \geq 0,~ \supp(R_i) \subset \supp(g)$.
      First, since $R_0=g$, the result is clear for $i = 0.$ \\
      Suppose there exists $i \geq 0$ such that $\supp(R_i) \subset \supp (g). $
      We have \[R_{i+1} = R_i-(\LC(R_i)\cdot  \LC(f)^{q^{d_f-d_{R_i}}}X^{q^{d_{R_i}-d_f}})\circ f. \]
      Since $\LC(f) \in \Fq,$ and by induction hypothesis,  $\LC(f)^{q^{d_f-d_{R_i}}}X^{q^{d_{R_i}-d_f}}$ is a polynomial with coefficients in $\supp(g).$
      By Lemma \ref{compo fq}, \[\supp((\LC(R_i)\cdot  \LC(f)^{q^{d_{R_i}-d_f}}X^{q^{d_f-d_{R_i}}})\circ f) \subset \supp(g).\] Therefore, $\supp(R_{i+1}) \subset \supp(g).$ \\
      If the last step is $N$, by taking $i=N,$ we obtain the result.
\end{proof}

\begin{proposition} \label{inclusion support}
      We have \[ \supp(\Phi(e)) \subset \supp(e).\]
\end{proposition}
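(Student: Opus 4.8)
The plan is to unwind the explicit formula for the lifting $\Phi$ provided by Theorem~\ref{CRT2} and then apply Lemmas~\ref{compo fq} and~\ref{reste} block by block. Write the error as a tuple $e=(e_1,\dots,e_s)$ with $e_i\in\GFX{q^m}_{d_i}$, so that $\supp(e)=\sum_{i=1}^{s}\supp(e_i)$. Theorem~\ref{CRT2} then gives
\[
\Phi(e)=\pi_h\!\left(\sum_{i=1}^{s} e_i\circ S_{1,i}\circ b_i\right),
\]
where $b_i=\underset{j\neq i}{\vee_l}f_j$, $S_{1,i}$ is the inverse of $b_i$ modulo $f_i$, and $\pi_h$ denotes the remainder of the right division by $h:=f_1\vee_l\cdots\vee_l f_s$. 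Since the support of a sum is contained in the sum of the supports, it is enough to control the support of each summand $e_i\circ S_{1,i}\circ b_i$ and then of the final reduction by $h$.

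The decisive observation — and the reason this section restricts to moduli with coefficients in $\GF q$ — is that the auxiliary polynomials $b_i$, $h$ and $S_{1,i}$ all lie in $\GFX q$. Indeed, on $\GF q$ the $q$-Frobenius is the identity, so $\GFX q$ is an ordinary commutative polynomial ring, isomorphic to $\GF q[Y]$ via $X^{q^i}\mapsto Y^i$; as a sub-ring of $\GFX{q^m}$ it is stable under composition (since $(aX^{q^i})\circ(bX^{q^j})=ab\,X^{q^{i+j}}$ for $a,b\in\GF q$) and under right division. Hence iterating $\vee_l$ on the $f_j$ and running the extended Euclidean algorithm to produce the B\'ezout cofactors never leaves $\GFX q$, so $b_i,\,h,\,S_{1,i}\in\GFX q$ for every $i$.

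With this in hand the argument is a short chain of inclusions. Fix $i$. From $\supp(e_i)\subset\supp(e)$ and $S_{1,i}\in\GFX q$, Lemma~\ref{compo fq} (applied with $S=\supp(e)$) gives $\supp(e_i\circ S_{1,i})\subset\supp(e)$; applying it once more with $b_i\in\GFX q$ and using associativity of $\circ$ yields $\supp(e_i\circ S_{1,i}\circ b_i)\subset\supp(e)$. Summing over $i$, $\supp\!\bigl(\sum_{i=1}^{s} e_i\circ S_{1,i}\circ b_i\bigr)\subset\supp(e)$. Finally, since $h\in\GFX q$, Lemma~\ref{reste} shows that reducing modulo $h$ cannot enlarge the support, whence
\[
\supp(\Phi(e))\subset\supp\!\left(\sum_{i=1}^{s} e_i\circ S_{1,i}\circ b_i\right)\subset\supp(e),
\]
as claimed.

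There is no genuine obstacle here; the only step requiring care is the bookkeeping of the middle paragraph — confirming that $b_i$, $h$ and the B\'ezout cofactors $S_{1,i}$ really do have coefficients in $\GF q$ — so that Lemmas~\ref{compo fq} and~\ref{reste} are applicable to every factor appearing in the formula for $\Phi$.
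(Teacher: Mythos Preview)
Your proof is correct and follows essentially the same route as the paper: express $\Phi(e)$ via the CRT formula of Theorem~\ref{CRT2}, observe that all auxiliary polynomials $b_i$, $S_{1,i}$, and $h$ lie in $\GFX q$ because the moduli do, and then invoke Lemmas~\ref{compo fq} and~\ref{reste} to keep every support inside $\supp(e)$. Your middle paragraph justifying that the Euclidean algorithm stays in $\GFX q$ is more explicit than the paper's one-line assertion, but the argument is the same.
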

\begin{proof}
      Denote for every $g \in \QP$, $\pi_{1,i}(g)$ the remainder of $g$ in the right division by $h_i.$ For all $i \in \{1, \cdots, s\},$ denote $b_i = \underset{j \neq i}{\vee_l} f_j$, and  $S_{1,i}$ and $S_{2,i}$  coefficients such that $S_{1,i} \circ  b_i + S_{2,i} \circ f_i = X. $
      We have, by Theorem \ref{CRT2},\[ \Phi(e) = \pi_{1,s}(\sum_{i=1}^s \pi_i(e) \circ S_{1,i} \circ b_i).\]
      We deduce from the fact that for all $i \in \{1, \cdots, s \}, ~ f_i \in \Fq\langle X^q \rangle$ that $b_i \in \Fq\langle X^q \rangle,$ and that $S_{1,i}$ and $S_{2,i}\in \Fq\langle X^q \rangle$. By Lemma \ref{compo fq} and \ref{reste}, for every $ i \in \{ 1, \cdots,s\}$, we have $\supp(  \pi_i(e) \circ S_{1,i} \circ b_i) \subset \supp(e).$ Therefore  applying Lemma \ref{reste} again, $\supp(\Phi(e)) \subset \supp(e).$
\end{proof}

\subsection{Computing the error}

Consider the set $\mathcal{M}_A(\QP_{k}) = \{P \circ A,~ P \in \QP_{k} \} $. Seeing the $q$ polynomials of $q$-degree smaller than $k+\alpha$ as vectors of length $k+\alpha$ with coefficients in $\Fqm,$ we can see the set $\mathcal{M}_A(\QP_{k})$ as a vector subspace of $\Fqm^{k+\alpha}$ of dimension $k$, and thus, as a code of length $k+\alpha$ and dimension $k$ over $\Fqm$. Consider $H \in \mathcal{M}_{\alpha \times (k+\alpha) }(\Fqm)$ a parity check matrix of $\mathcal{M}_A(\QP_{k})$.

\noindent We then have, for every $P \in \QP_{k}, $ \[ H(P\circ A) =0, \] and therefore,  \[ H(P\circ A + \underline{E}) = H \underline{E}.  \]
Once we know $\supp(E),$ to compute $\underline{E}$, one can compute $s:= H(P\circ A + \underline{E})$, and solve the system $s= H(\underline{E}).$ (see \cite{gaborit_low_2013}). \\
This system is a system with $r(k+\alpha)$ unknowns and $m\alpha$ equations over $\Fq,$ which we can solve if $r<\frac{m\alpha}{k+\alpha}.$

\noindent We are then able to recover $P\circ A$ by subtracting $E$ to $\Phi(y).$ We then deduce $P$ by dividing on the right $P\circ A$ by $A$. \\

\subsection{Decoding algorithm}

In this subsection, we sum up the decoding algorithm.
\noindent We keep the notations from the last subsection.
Recall that $\mathcal{C}= C_{F,k,A}, $  and that $H$ is a parity check matrix of $\mathcal{M}_A(\QP_{k})$.

\begin{algorithm}[htbp]
      \caption{Decoding algorithm in the case where the moduli polynomials have coefficients in $\Fq$.}
      \begin{algorithmic}[1]
            \REQUIRE $ y =  c +  e$, where $c \in \mathcal{C},$ and $w_r(e) \leqslant \min( \frac{m\alpha}{k+\alpha },n-k-\alpha).$
            \ENSURE  $P \in \QP_{k}$ such that $d_r(y,\Psi_A (P))\leqslant w_r(e),$ or failure.
            \STATE Compute $Y := \Phi(y) \in \QP$.
            \STATE From $Y$, deduce $\overline{E}$, by computing the sum of the monomials of $Y$ of $q$-degree superior or equal to $k+\alpha$, and compute $\supp(\overline{E}).$
            \STATE  Compute $s:= H(Y-\overline{E}),$ and solve the system $H\underline{E}=s$, using $\supp(\overline{E})$, to find $\underline{E}$.
            \STATE Deduce $P\circ A,$ by computing $Y-\underline{E}-\overline{E}$.
            \STATE Compute the right division of $P\circ A$ by $A.$
      \end{algorithmic}
\end{algorithm}

This algorithm exhibits a certain decoding failure rate. Specifically, it fails when $\supp(\overline{E}) \subsetneq \supp(E)$, as it cannot recover $\supp(E)$ under this condition. Supposing that the error is uniformly drawn, \color{black}the probability that $\supp(\overline{E}) = \supp(E)$ is detailed in Proposition \ref{proba}.

When $\supp(\overline{E}) \subsetneq \supp(E)$, the codeword associated with the algorithm's output is not necessarily the closest codeword to the input. To determine if the algorithm has failed, one can verify whether the codeword associated with the output has a rank distance of at most $r$ from the input, where $r = w_r(e)$.

It is noteworthy that if $\min\left(\frac{m\alpha}{k+\alpha}, n-k-\alpha\right)$ exceeds $\frac{n-k}{2}$, the algorithm can decode the input even when the rank weight of the error exceed the unique decoding radius. In such cases, however, the codeword associated with the algorithm's output is not guaranteed to be the closest codeword to the input.

\subsection{Examples of probabilities of success}

In this section, we present graphs illustrating the success probability of the decoding algorithm, under the assumption that the error follows a uniform distribution.\color{black} The blue curve represents the probability of obtaining a basis of the support of the lifted error using its second block, as a function of the rank weight of the lifted error. The red line indicates the bound given by the linear system, while the black line corresponds to the unique decoding radius.

\noindent
Using the same notation as above, the parameters used to generate the following graph are $n=200$, $k=50$, $\alpha=50$, $q=5$, and $m=80$. \\

\begin{figure} [H]
      \centering
      \includegraphics[scale = 0.4]{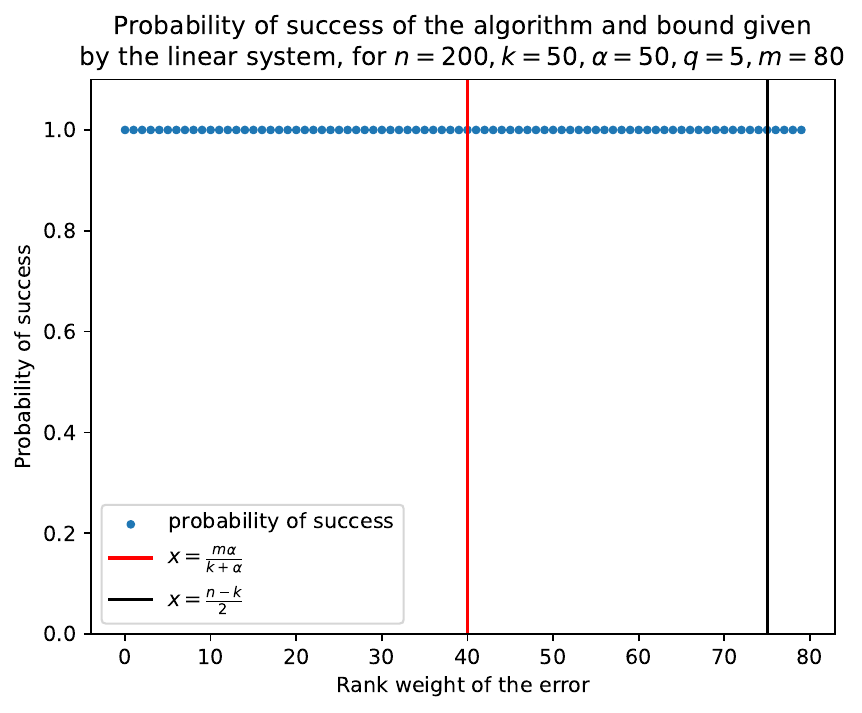}\\
      \caption{We can observe that the probability of success is close to one for all rank weights that the algorithm can decode, i.e. all the rank weights smaller than the bound given by the linear system.}
\end{figure}

\noindent We can modify the relative positions of the bounds by changing the parameters of the code. For example, in the following graphs, we fix the code length, the dimension, the $q$-degree of the polynomial $A$, and the characteristic of the alphabet. We vary the value of $m$, which is the degree of the extension of the alphabet over $\Fq$. The parameters used to generate the graphs are: $n = 70$, $k = 15$, $\alpha = 14$, and $q = 2$.
\begin{figure}[H]
      \centering
      \includegraphics[scale = 0.4]{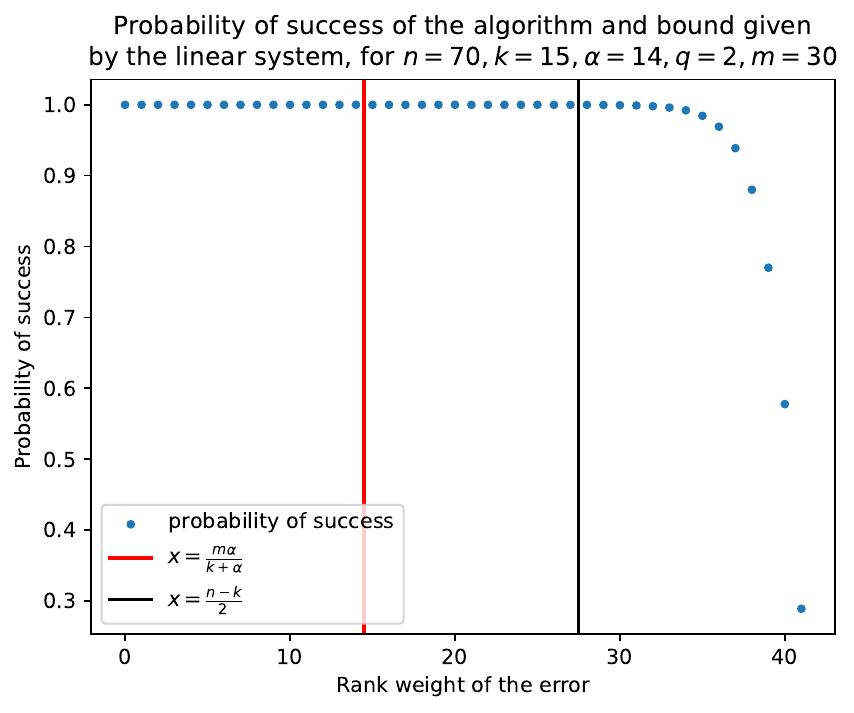}
      \caption{In this example, the bound given by the linear system is smaller than the unique decoding radius. The probability is close to one for all rank weights smaller than this bound. }
      \label{fig: avant dec unique}
\end{figure}

\begin{figure} [H]
      \centering
      \includegraphics[scale = 0.4]{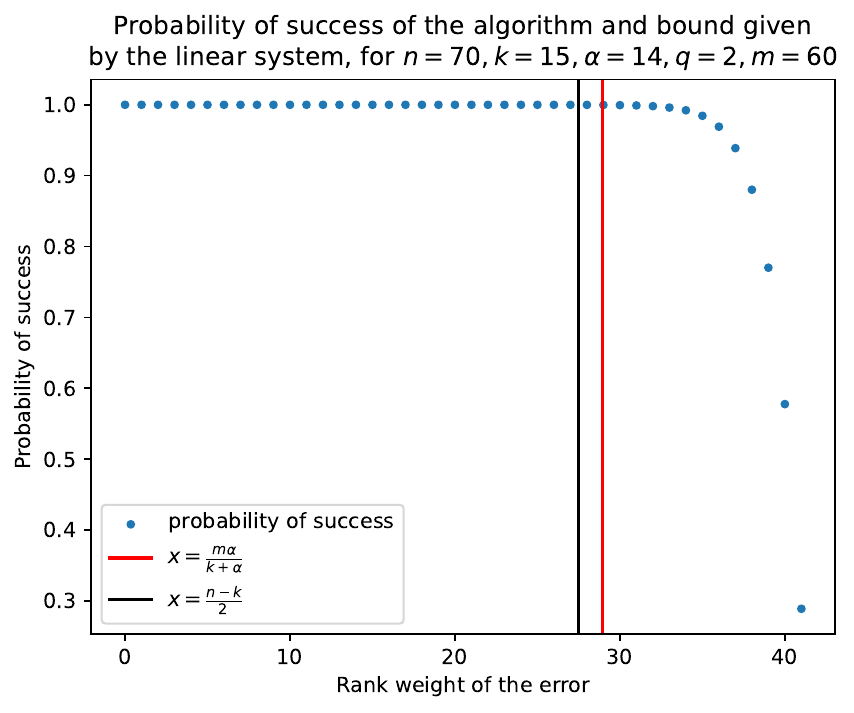}
      \caption{In this example, the bound given by the linear system is larger than the unique decoding radius. The probability is close to one for all rank weights smaller than this bound.}
      \label{fig: apres dec unique}
\end{figure}
\begin{figure}[H]
      \centering
      \includegraphics[scale = 0.4]{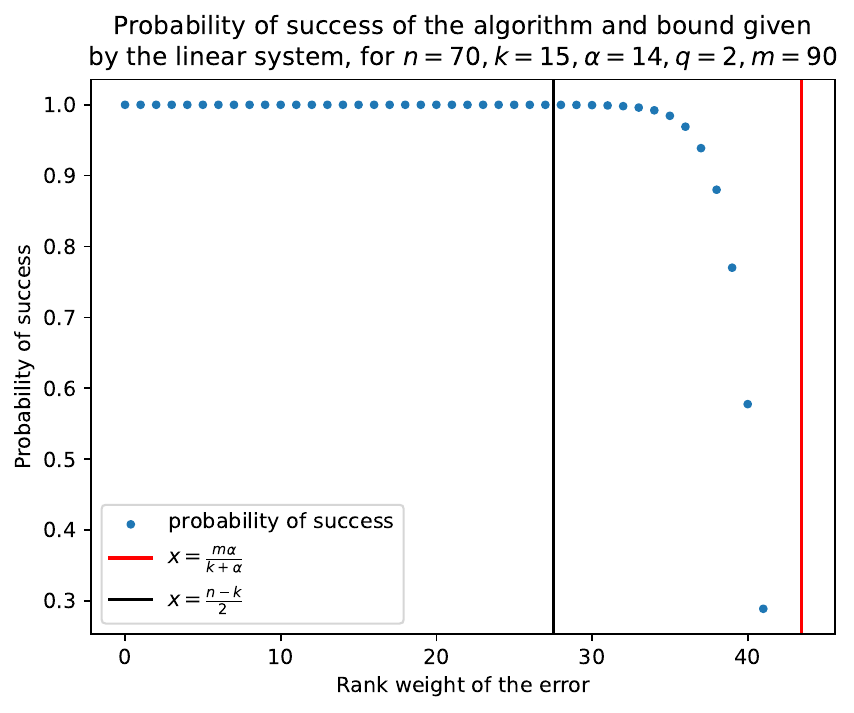}
      \caption{In this example, the bound given by the linear system is larger than the unique decoding radius. However, the probability drops for rank weights below this bound, indicating that the algorithm may fail before reaching the limit imposed by the linear system.}
      \label{fig: apres chute proba}
\end{figure}

Theses figures illustrate the fact that the value of $m$ does not affect the probability. However, it does impact the bound given to be able to solve the linear system to recover the lifted error. Depending on the value of $m$, this bound can be either smaller or larger than the unique decoding radius. This implies that the choice of $m$ can determine whether the algorithm is able to decode up to the unique decoding radius: \\
In figure \ref{fig: avant dec unique}, the algorithm cannot decode up to the unique decoding radius. In this case, the limit of the algorithm is the bound given by the linear system to recover the lifted error, because the probability of recovering the support of the lifted error with its second block is close to one for all rank weights smaller than this bound.  Increasing the value of $m$ may allow the algorithm to reach the unique decoding radius, as shown in figure \ref{fig: apres dec unique}. In this case again, the limit of the algorithm is the bound given by the linear system to compute the lifted error. Moreover, selecting an appropriate $m$ can ensure that the algorithm only attempts to decode errors for which the success probability is high. Indeed, if the value of $m$ is too large, the limit of the algorithm can becomes the probability: in figure \ref{fig: apres chute proba}, the probability of recovering the full support of the lifted error drops for rank weights below the bound given by the linear system to compute the lifted error.

\section{Decoding a wider class of $q$-CRT codes} \label{widerdecod}

Here, we consider the case where $f_1, \cdots, f_s \in \mathbb{F}_{q^l}
      \langle X^q \rangle$ with $l$ small. We explain how the algorithm of the
previous section can be extended to this case. The algorithm is almost the
same. The main impact is on the bounds for the probabilities of decoding. We
use the same notations than the above section. We introduce a new notion. If
$\mathcal{A}$ and $\mathcal{B}$ are two $\mathbb{F}_q$-subvector spaces of
$\mathbb{F}_{q^m}$, we denote $\mathcal{A} \cdot \mathcal{B}= \langle a \cdot
      b | a \in \mathcal{A} \text{ and } b \in \mathcal{B} \rangle \nobracket$ the
``product of the two vector spaces''. If they have dimension respectively $l$
and $k$ and $\mathcal{A}= \langle a_1, \cdots, a_l \rangle$ and $\mathcal{B}=
      \langle b_1, \cdots, b_k \rangle$ then $\mathcal{A} \cdot \mathcal{B}$ has
dimension less than $l \cdot k$ and $\left\{ a_i \cdot b_j | \nobracket i \in
      \{ 1, \cdots, l \}\text{ and } k \in \{ 1, \cdots, k
      \} \right\}$ is a generating set of $\mathcal{A} \cdot \mathcal{B}$. \

In order to decode $\mathcal{C}_{F, A, k}$, we still proceed in two steps.
The first one consists in finding a basis of the lifted error support and the second
to recover the message given the support of the error.

Just as the previous section if $y = c + e$ with $c \in \mathcal{C}_{F, A,
            k}$ and $e \in \mathbb{F}_{q^m} \langle X^q \rangle_n$ with $w_r(e)
      \leqslant r$, we compute $\Phi (y) = \Phi (c) + \Phi (e) = P \circ A + E$ with
$P$ of $q$-degree strictly less than $k$ and we have $\Phi (y) = P \circ A +
      \underline{E} + \overline{E}$ just as in \ref{remontee}, $\deg_q \left(
      \underline{E} \right) < k+\alpha$ and $\underline{E} (X) = \underset{i =
            0}{\overset{k+\alpha - 1}{\sum}} e_i \cdot X^{q^i}$ and $\overline{E} (X) =
      \overset{n - 1}{\underset{i = k +\alpha}{\sum}} e_i \cdot X^{q^i}$. We denote $E (X) =
      \underline{E} (X) + \overline{E} (X)$, $\mathcal{E}= \tmop{supp} (E)$,
$\underline{\mathcal{E}} = \tmop{supp} \left( \underline{E} \right)$ and
$\overline{\mathcal{E}} = \tmop{supp} \left( \overline{E} \right)$. The main
difficulty arises from the fact that $\Phi$ does not preserve the support
anymore. It is to say that $\tmop{supp} (E)\nsubseteq \tmop{supp} (e)$.

\begin{proposition}
      We have $\tmop{supp} (E) \subset \tmop{supp} (e) \cdot \mathbb{F}_{q^l}$ and
      then $\dim (\tmop{supp} (E)) \leqslant r \cdot l$.
\end{proposition}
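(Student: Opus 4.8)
The plan is to trace the support through the explicit formula for $\Phi$ given by Theorem~\ref{CRT2}, exactly as in Proposition~\ref{inclusion support}, but now keeping track of the fact that the B\'ezout coefficients and the $b_i$ lie in $\GFX{q^l}$ rather than $\GFX q$. First I would recall that, by Theorem~\ref{CRT2}, $\Phi(e) = \pi_{1,s}\left(\sum_{i=1}^s \pi_i(e) \circ S_{1,i} \circ b_i\right)$, where $b_i = \bigvee_{j\neq i} f_j$ and $S_{1,i}\circ b_i + S_{2,i}\circ f_i = X$. Since each $f_j \in \GFX{q^l}$ and $\GFX{q^l}$ is a ring closed under $\vee_l$ and under the extended Euclidean algorithm, all of $b_i$, $S_{1,i}$, $S_{2,i}$, and $h_s = f_1 \vee_l \cdots \vee_l f_s$ lie in $\GFX{q^l}$.

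The key step is a generalization of Lemmas~\ref{compo fq} and~\ref{reste}: if $g \in \GFX{q^m}$ with $\supp(g) \subset S$ and $f \in \GFX{q^l}$, then $\supp(g\circ f) \subset S \cdot \GF{q^l}$ and likewise the remainder of $g$ under right division by $f$ has support in $S\cdot \GF{q^l}$. The proof is the same coefficient computation as in Lemma~\ref{compo fq}: in $g\circ f = \sum_l h_l X^{q^l}$ with $h_l = \sum_{i+j=l} g_i f_j^{q^i}$, each $g_i \in S$ and each $f_j^{q^i} \in \GF{q^l}$ (because $\GF{q^l}$ is fixed setwise by the $q$-power Frobenius, being the splitting field of $X^{q^l}-X$), so $h_l \in S\cdot\GF{q^l}$; the division statement then follows by the same induction on the steps of Algorithm~\ref{rqorem}, using that $\GF{q^l}\cdot\GF{q^l} = \GF{q^l}$ so the bound $S\cdot\GF{q^l}$ is stable. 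Applying this to $e$, whose support is $\supp(e)$ of dimension $r$, each term $\pi_i(e)\circ S_{1,i}\circ b_i$ has support in $\supp(e)\cdot\GF{q^l}$, the sum has support in $\supp(e)\cdot\GF{q^l}$, and one final application of the division statement with $S = \supp(e)\cdot\GF{q^l}$ (which is already $\GF{q^l}$-stable) gives $\supp(\Phi(e)) \subset \supp(e)\cdot\GF{q^l}$, hence $\supp(E) \subset \supp(e)\cdot\GF{q^l}$ since $E$ is a truncation of $\Phi(e)$.

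The dimension bound is then immediate: if $\supp(e) = \langle \varepsilon_1,\dots,\varepsilon_r\rangle$ and $\{c_1,\dots,c_l\}$ is an $\GF q$-basis of $\GF{q^l}$, then the $rl$ products $\varepsilon_i c_j$ span $\supp(e)\cdot\GF{q^l}$, so $\dim(\supp(E)) \le \dim(\supp(e)\cdot\GF{q^l}) \le r\cdot l$. I expect the only real subtlety to be the bookkeeping in the generalized Lemma~\ref{compo fq}/\ref{reste} — specifically checking that $\GF{q^l}$ is closed under the relevant Frobenius powers and that the product space $S\cdot\GF{q^l}$ is genuinely stable under further such operations so the induction closes — everything else being a routine transcription of the $\GF q$-coefficient argument already carried out for Proposition~\ref{inclusion support}.
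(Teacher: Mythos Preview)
The paper does not actually give a proof of this proposition; it is stated without proof. Your argument is the correct and natural one: it is exactly the generalization of the proof of Proposition~\ref{inclusion support} from $\GF q$-coefficients to $\GF{q^l}$-coefficients, using that $\GF{q^l}$ is stable under $q$-power Frobenius so that the generalized Lemmas~\ref{compo fq} and~\ref{reste} go through with the product space $S\cdot\GF{q^l}$ in place of $S$.

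One small correction: you write that ``$E$ is a truncation of $\Phi(e)$'', but in the paper's notation $E = \underline{E} + \overline{E} = \Phi(e)$ itself, not a truncation (the truncations are $\underline{E}$ and $\overline{E}$). This only simplifies your final step, so the argument stands.
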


In fact, one can consider the preceding algorithm but taking $l \cdot r$
instead of $r$ for the bound of the probability to get  the whole support of
the error on the upper part, supposing that the error follows a uniform distribution.\color{black}

\begin{proposition}
      Let $r \in \left\{ 1, \cdots, \min \left( \left\lfloor \frac{m}{l}
            \right\rfloor, \left\lfloor \frac{n - \alpha - k}{l} \right\rfloor \right)
            \right\}$. We assume that $e$ follows a uniform distribution on
      $\mathbb{F}_{q^m}$. Knowing that $w_r (E) \leqslant r \cdot l$, we have
      $\overline{\mathcal{E}} =\mathcal{E}$ with probability bounded by $q^{l \cdot r \cdot
                        (k+ \alpha)} \underset{i = 0}{\overset{r \cdot l - 1}{\prod}} \frac{q^{n -(k+ \alpha)} -
                  q^i}{q^n - q^i}$.
\end{proposition}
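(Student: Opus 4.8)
The plan is to reduce this proposition to the combinatorial counting argument already carried out in the proof of Proposition~\ref{proba}. The key observation is that although $\Phi$ no longer preserves the support (so $\mathcal{E}$ need not be contained in $\supp(e)$), we do have the inclusion $\mathcal{E} = \supp(E) \subset \supp(e)\cdot\mathbb{F}_{q^l}$ from the previous proposition, and consequently $\overline{\mathcal{E}} \subset \mathcal{E}$ with $\dim(\mathcal{E}) \leqslant r\cdot l$. Hence, exactly as before, $\overline{\mathcal{E}} = \mathcal{E}$ if and only if $\dim(\overline{\mathcal{E}}) = \dim(\mathcal{E})$. So I would condition on the event $w_r(E) = r'$ for an arbitrary $r' \leqslant r\cdot l$ and estimate $\mathbb{P}(w_r(\overline{E}) = r' \mid w_r(E) = r')$.

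First I would note that, while $E$ is a deterministic (linear) image of the uniformly random $e$, the vector $E$ itself need not be uniform on $\mathbb{F}_{q^m}^n$; this is the essential difference from Proposition~\ref{proba}, where uniformity of $E$ was used directly. To get around this I would argue that what actually matters is only the \emph{conditional} distribution of $\overline{E}$ given $E$: once we know $E$ has rank weight $r'$, its support $\mathcal{E}$ is a fixed $r'$-dimensional $\mathbb{F}_q$-subspace, and the event $\{w_r(\overline{E}) = r'\}$ depends only on whether the last $n-(k+\alpha)$ coordinates of $E$ already span $\mathcal{E}$. Therefore I would bound the conditional probability by the same ratio of cardinalities appearing in the proof of Proposition~\ref{proba}, namely
\[
\mathbb{P}\bigl(w_r(\overline{E}) = r' \mid w_r(E) = r'\bigr) \;\leqslant\; q^{r'(k+\alpha)}\prod_{i=0}^{r'-1}\frac{q^{n-(k+\alpha)}-q^i}{q^n-q^i},
\]
provided one can argue that conditionally on having rank weight $r'$ and support $\mathcal{E}$, the coordinates of $E$ are ``no more concentrated'' on $\mathcal{E}$ than in the uniform case. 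In the interest of a clean statement the proposition only claims a \emph{bound}, so it suffices to dominate this conditional probability by the uniform-model value rather than compute it exactly.

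Next I would handle the dependence on $r'$: the quantity $q^{r'(k+\alpha)}\prod_{i=0}^{r'-1}(q^{n-(k+\alpha)}-q^i)/(q^n-q^i)$ is decreasing in $r'$ (each additional factor is at most $q^{k+\alpha}\cdot q^{n-(k+\alpha)}/q^n \cdot (1-q^{i-n+(k+\alpha)})/(1-q^{i-n}) \leqslant 1$ when $i \leqslant n-(k+\alpha)$), so over the range $r' \leqslant r\cdot l$ the maximum is attained at the worst case $r' = r\cdot l$, which gives exactly the stated bound $q^{l r(k+\alpha)}\prod_{i=0}^{rl-1}(q^{n-(k+\alpha)}-q^i)/(q^n-q^i)$; the hypothesis $r \leqslant \min(\lfloor m/l\rfloor, \lfloor (n-\alpha-k)/l\rfloor)$ guarantees $rl \leqslant m$ and $rl \leqslant n-(k+\alpha)$ so that all these factors are well-defined and lie in $(0,1]$. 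Finally, I would assemble: $\mathbb{P}(\overline{\mathcal{E}}=\mathcal{E} \mid w_r(E)\leqslant rl) = \mathbb{P}(\dim\overline{\mathcal{E}} = \dim\mathcal{E} \mid w_r(E)\leqslant rl)$, and by conditioning on the exact value $r'$ of $w_r(E)$ and using the monotonicity just established, this is bounded above by the value at $r' = rl$. The main obstacle is the first point above — justifying that passing from the genuine (non-uniform) law of $E$ to the uniform model can only increase the relevant conditional probability, i.e.\ making precise the domination argument; everything else is bookkeeping already present in the proof of Proposition~\ref{proba}.
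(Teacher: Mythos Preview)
Your ``main obstacle'' is not an obstacle at all, and worrying about it has led you to miss the simple argument. The map $\Phi$ is the inverse of the CRT isomorphism $\Pi$, hence an $\mathbb{F}_{q^m}$-linear \emph{bijection} from $\prod_i \GFX{q^m}_{d_i}$ to $\GFX{q^m}_n$, and this holds regardless of whether the $f_i$ have coefficients in $\mathbb{F}_q$ or in $\mathbb{F}_{q^l}$. What fails in the $\mathbb{F}_{q^l}$ setting is the \emph{support inclusion} $\supp(E)\subset\supp(e)$, not the bijectivity. Consequently, if $e$ is uniform on $\mathbb{F}_{q^m}^n$ then $E=\Phi(e)$ is again uniform on $\mathbb{F}_{q^m}^n$, exactly as in the proof of Proposition~\ref{proba}. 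So no domination argument is needed: for each fixed value $r'=w_r(E)$ you may invoke Proposition~\ref{proba} verbatim (with $r$ there equal to $r'$).

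There is also a direction error in your monotonicity step. You correctly observe that
\[
r'\longmapsto q^{r'(k+\alpha)}\prod_{i=0}^{r'-1}\frac{q^{n-(k+\alpha)}-q^i}{q^n-q^i}
\]
is decreasing in $r'$, but then write that ``the maximum is attained at $r'=r l$'' and that the conditional probability is ``bounded above'' by the value at $r'=rl$. A decreasing function attains its \emph{minimum} at the largest argument; the mixture over $r'\leqslant rl$ is therefore bounded \emph{below} by the value at $r'=rl$, which is the intended reading of ``bounded by'' in the statement (it is a success probability, and one wants a lower bound). The paper itself gives no proof beyond the one-line remark preceding the proposition that one simply takes $l\cdot r$ in place of $r$; once you recognise that $E$ is still uniform, the result is exactly Proposition~\ref{proba} applied at the worst case $r'=rl$, together with the monotonicity you noted (with the inequality pointing the right way).
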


If we have computed a basis of $\tmop{supp} (E)$, solving the linear system to compute the coefficients of $E$ like in the previous decoding algorithm requires that $r\leqslant \frac{m\alpha}{l(k+\alpha)}$.

\begin{remark}
      Here we do not take advantage that we know that $\tmop{supp}(E) \subset \tmop{supp}(e) \cdot \GF{q^l}$. One can try to use the approach used for syndrome decoding of LRPC codes (see \cite{gaborit_low_2013}) in order to improve the probability. But it will not improve the bound imposed by the linear system to recover the coefficients of $E$.
\end{remark}

\subsection{Examples of probabilities of success}

In this section, we present graphs illustrating the success probability of the decoding algorithm as a function of the rank weight of the lifted error, for different values of $l$, where $l$ is the degree of the extension in which the coefficients of the moduli polynomials reside. We suppose that the error follows a uniform distribution in the set of vectors with coefficients in $\Fqm$ and length $n$.\color{black}

\noindent Keeping the same notation as above, the parameters chosen for these examples are $n = 200$, $k = 30$, $\alpha = 10$, $q = 2$, and $m = 100$.

\begin{figure}[H]
      \centering
      \includegraphics[scale = 0.4]{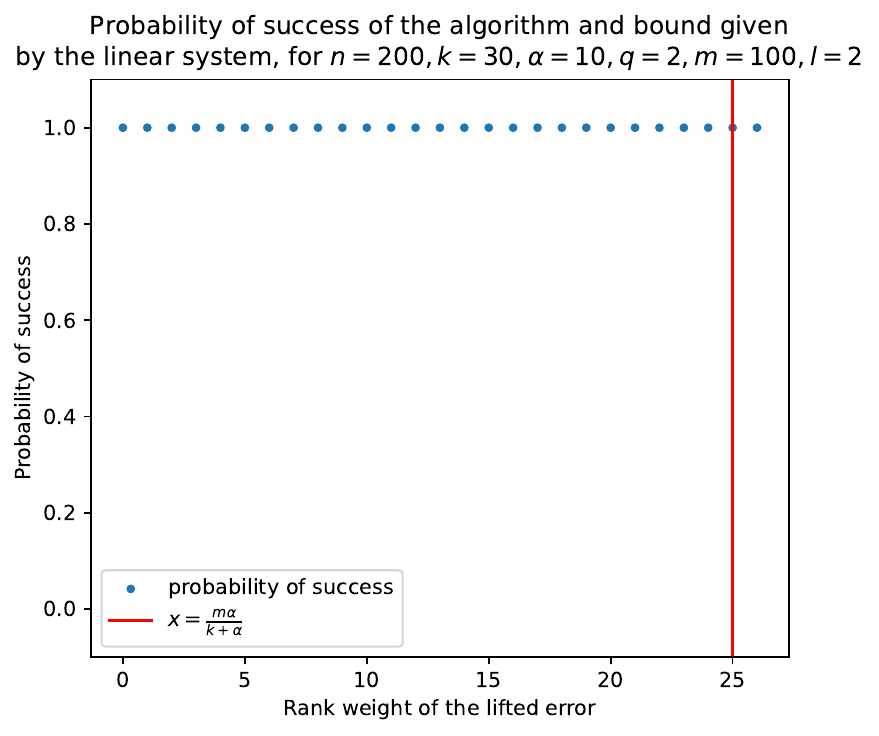}
      \caption{For small values of $l$, the probability remains close to 1 for all rank weights smaller than the bound of the linear system. }
      \label{fig: l2}
\end{figure}

\begin{figure}[H]
      \centering
      \includegraphics[scale = 0.4]{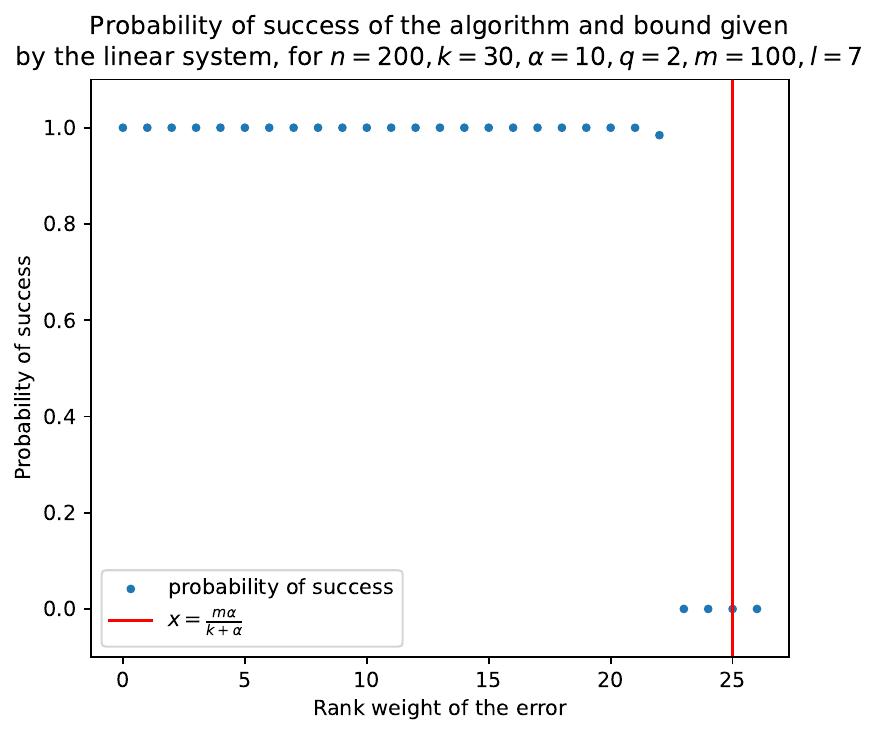}
      \caption{The probability drops before the bound given by the linear system.}
      \label{fig: l7}
\end{figure}
\begin{figure}[H]
      \centering
      \includegraphics[scale = 0.4]{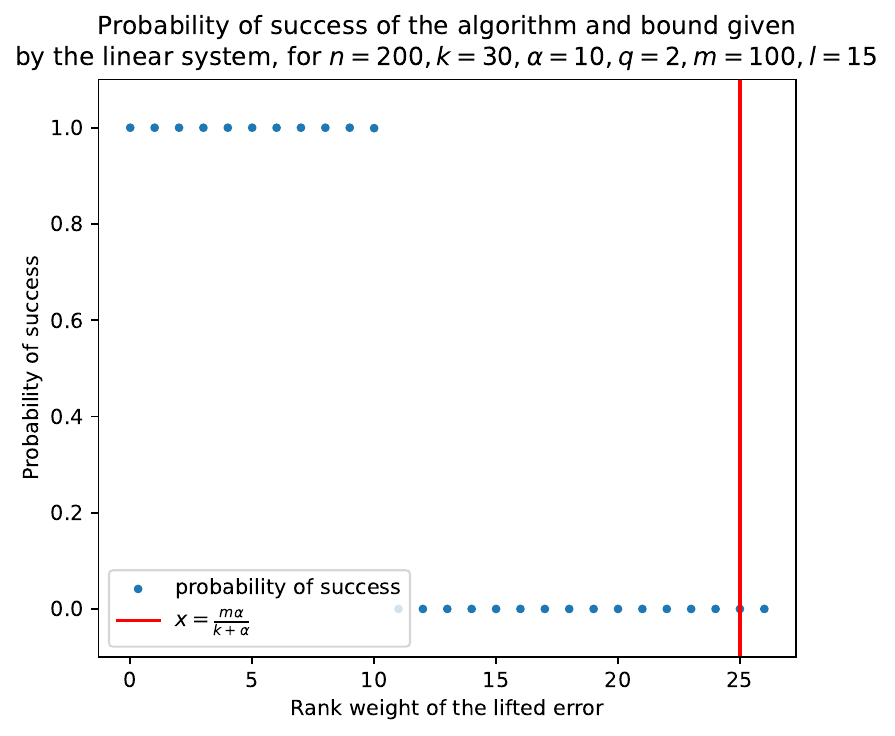}
      \caption{When $l$ increases, the success probability drops for smaller rank weights.}
      \label{fig: l15}
\end{figure}
If $l$ is small enough, the probability of recovering the full support of the lifted error remains close to 1 for all rank weights that the algorithm is able to decode (see Figure \ref{fig: l2}). When $l$ becomes larger, the probability drops before the bound given by the linear system, and the limiting factor of the algorithm becomes the probability of recovering the full support (see Figure \ref{fig: l7}). As $l$ increases, the success probability drops for smaller rank weights: the larger $l$ is, the earlier the probability begins to decline.

\section{Conclusions and perspectives}

In this paper, we presented a novel family of codes designed for both rank and sum-rank metrics, offering a rich parameter space for constructing codes with tailored properties. Our work introduces linearized Chinese Remainder Theorem ($q$CRT) codes, which not only expand the landscape of available codes but also provide insights into the structure and behavior of these important algebraic objects.

We have made significant progress in developing efficient decoding algorithms for our new code family. Specifically, we provided: \\
A polynomial-time decoding algorithm with a controlled error rate for a particular subclass of linearized CRT codes.\\
An extension of this algorithm to a broader class, demonstrating its versatility and applicability.

Our future research directions are twofold:

Improved Decoding: We aim to develop a decoding algorithm that maintains polynomial running time but eliminates the failure rate observed in our current algorithm. This would enable more robust and reliable communication and storage systems based on linearized CRT codes.\\
Applications: We plan to explore various applications of these new codes, including:\\
Local decoding in sum-rank metric for improved error correction capabilities in distributed storage systems for instance.\\
Probabilistic checkable proofs, leveraging the unique properties of our codes to enhance verification efficiency in cryptographic protocols.\\
Distributed storage and communication, where our codes' adaptability can lead to more resilient and efficient networks.\\
Cryptography, where the rich parameter space of linearized CRT codes may enable novel constructions or improvements to existing cryptosystems.\\
By pursuing these avenues, we expect to further unlock the potential of rank and sum-rank-metric codes, driving advancements in both theoretical understanding and practical applications.

\tmstrong{Acknowledgements:} The authors would like to thank Alain Couvreur for pointing out the connection with simple codes, and the anonymous referees for their valuable suggestions.

\bibliographystyle{plain}
\bibliography{QCRT}

\end{document}